\documentclass[12pt]{amsart}
\usepackage{etex}
\usepackage{etoolbox}
\patchcmd{\thebibliography}{*}{}{}{}
\usepackage{amssymb}
\usepackage{cite}
\usepackage{booktabs}
\usepackage{url}
\usepackage{hyphenat}
\usepackage{mathtools}
\usepackage{pifont}
\usepackage[all,cmtip]{xy}
\usepackage{ifpdf}
\usepackage{enumitem}

\usepackage{xcolor}
\usepackage{graphicx}
\usepackage[lmargin=1in,rmargin=1in,tmargin=1in,bmargin=1in]{geometry}
\usepackage{subfig}
\captionsetup[subfloat]{labelfont={small,bf},font=normalsize,labelformat=simple,labelsep=colon}

\usepackage{mathrsfs}
\usepackage{overpic}
\usepackage{xr-hyper}

\definecolor{darkblue}{rgb}{0,0,0.4} 
\definecolor{darkgreen}{rgb}{0,0.4,0} 
\definecolor{mypurple}{rgb}{0.4,0.18,.57} 
\usepackage[colorlinks=true, citecolor=darkblue, filecolor=darkblue, linkcolor=darkblue,urlcolor=darkblue]{hyperref} 
\usepackage[all]{hypcap}

\usepackage{tikz}
\usetikzlibrary{matrix,arrows,3d}
\tikzset{zxplane/.style={canvas is zx plane at y=#1,very thin}}
\tikzset{xyplane/.style={canvas is xy plane at z=#1,very thin}}
\tikzset{yzplane/.style={canvas is yz plane at x=#1,very thin}}

\allowdisplaybreaks

\graphicspath{{draws/}{}}


\newcommand{\RR}{\mathbb R}
\newcommand{\CC}{\mathbb C}

\newcommand{\FF}{\mathbb F}

\newcommand{\co}{\nobreak\mskip2mu\mathpunct{}\nonscript
  \mkern-\thinmuskip{:}\penalty300\mskip6muplus1mu\relax}
\newcommand{\from}{\co}


\newcommand{\bdy}{\partial}
\newcommand{\into}{\hookrightarrow}

\newcommand{\lbracket}{[}
\newcommand{\rbracket}{]}




\renewcommand{\emptyset}{\varnothing}



\theoremstyle{plain}

\numberwithin{equation}{section}
\newtheorem{theorem}[equation]{Theorem}

\newtheorem{proposition}[equation]{Proposition}
\newtheorem{lemma}[equation]{Lemma}
\newtheorem{corollary}[equation]{Corollary}

\theoremstyle{definition}

\theoremstyle{remark}

\newtheorem{remark}[equation]{Remark}

\hyphenation{Thurs-ton}
\hyphenation{mo-no-poles}
\hyphenation{sur-ger-y}


\newcommand{\HF}{\mathit{HF}}

\newcommand{\CF}{{\mathit{CF}}}

\newcommand{\x}{\mathbf x}
\newcommand{\y}{\mathbf y}


\newcommand\HH{\mathit{HH}}

\newcommand\Hochschild\HH





\newcommand\Id{\mathbb{I}}

\newcommand{\Field}{{\FF_2}}

\DeclareMathOperator{\Fix}{Fix}

\newcommand{\ol}[1]{\overline{#1}{}}
\newcommand{\wt}[1]{\widetilde{#1}{}}




\makeatletter
\newcommand\honestalg[3]{\bigl\lbracket
\begin{smallmatrix} #1\@ifempty{#3}{}{&#3} \\ #2 \end{smallmatrix}
\bigr\rbracket}

\makeatother
















\newcommand{\JSpace}{\mathcal{J}}

\newcommand{\ECat}{\mathscr{E}}

\newcommand{\Complexes}{\mathsf{Kom}}
\DeclareMathOperator{\hocolim}{hocolim}
\newcommand{\KhSymp}{\mathit{Kh}_{\mathit{symp}}}

\newcommand{\KCSymp}{\mathcal{C}_{\mathit{Kh},\mathit{symp}}}
\newcommand{\eKCSymp}{\mathcal{C}_{\mathit{Kh}}^{\mathit{symp,free}}}

\DeclareMathOperator{\Hilb}{Hilb}
\newcommand{\CipLag}{\mathcal{K}}
\newcommand{\ssspace}[1]{\mathcal{Y}_{#1}}

\newcommand{\ECF}[1][{}]{\widetilde{\CF}^{\raisebox{-4pt}{$\scriptstyle #1$}}}

\makeatletter

\newcommand*\wthelper[2]{%
        \hbox{\dimen@\accentfontxheight#1%
                \accentfontxheight#11.3\dimen@
                $\m@th#1\widetilde{#2}$%
                \accentfontxheight#1\dimen@
        }%
}
\newcommand*\accentfontxheight[1]{%
        \fontdimen5\ifx#1\displaystyle
                \textfont
        \else\ifx#1\textstyle
                \textfont
        \else\ifx#1\scriptstyle
                \scriptfont
        \else
                \scriptscriptfont
        \fi\fi\fi3
}
\makeatother


\setlength{\marginparwidth}{.8in}

\begin{document}
\title[Correction to A flexible construction\dots]{Correction to the
  paper ``A flexible construction of equivariant Floer homology and applications''}

\author{Kristen Hendricks}
 \address{Mathematics Department, Rutgers University\\
   New Brunswick, NJ 08901}
   \thanks{KH was supported by NSF grant DMS-1663778 and NSF CAREER grant DMS-1751857.}
\email{\href{mailto:kristen.hendricks@rutgers.edu}{kristen.hendricks@rutgers.edu}}

\author{Robert Lipshitz}
 \address{Department of Mathematics, University of Oregon\\
   Eugene, OR 97403}
\thanks{RL was supported by NSF grant DMS-1149800 (version 1) and
  DMS-1810893 (revisions).}
\email{\href{mailto:lipshitz@uoregon.edu}{lipshitz@uoregon.edu}}

\author{Sucharit Sarkar}
\thanks{SS was supported by NSF grant DMS-1643401}
\address{Department of Mathematics, University of California\\
  Los Angeles, CA 90095}
\email{\href{mailto:sucharit@math.ucla.edu}{sucharit@math.ucla.edu}}

\date{\today}

\begin{abstract}
  We correct a mistake regarding almost complex structures on Hilbert
  schemes of points in surfaces in~\cite{HEquivariant}. The error does
  not affect the main results of the paper, and only affects the
  proofs of invariance of equivariant symplectic Khovanov homology and
  reduced symplectic Khovanov homology. We give an alternate
  proof of the invariance of equivariant symplectic Khovanov homology.
\end{abstract}

\maketitle


\section{The mistake}
At several points in Section 7 of our paper~\cite{HEquivariant} we assert that
an almost complex structure $j$ on the algebraic surface $S$ used to define
symplectic Khovanov homology induces an almost complex structure $\Hilb^n(j)$ on
the Hilbert scheme (or Douady space, following~\cite{Douady66}) $\Hilb^n(S)$ of length $n$ subschemes of $S$. If $j$ is a
complex structure this is true, but there is no known extension of the Hilbert
scheme of points in a complex manifold to the almost-complex case. (Indeed, even
the definition of the Hilbert scheme as a set depends on the complex structure.)
See also Voisin's paper~\cite{Voisin} for some interesting steps in this
direction and further discussion.

This (false) principle is used in a ``cylindrical'' formulation of
symplectic Khovanov homology in~\cite[Lemma 7.10]{HEquivariant}, which is
then used in the proof of stabilization invariance for symplectic Khovanov
homology in~\cite[Section 7.4.1]{HEquivariant}, equivariant symplectic
Khovanov homology in~\cite[Theorem 1.26]{HEquivariant}, and reduced
symplectic Khovanov homology in~\cite[Theorem 7.25]{HEquivariant}. (See
also Abouzaid-Smith's paper~\cite{AbouzaidSmith:arc-alg} for a more careful
cylindrical reformulation of the curves in symplectic Khovanov
homology in certain cases, and Mak-Smith's recent paper~\cite{MakSmith:cylindrical} for a more general cylindrical reformulation.)

Below, we give a corrected, weaker version of the offending Lemma
7.10, and an independent proof of equivariant stabilization invariance
(i.e.,~\cite[Theorem 1.26]{HEquivariant}). We have not been able to
correct the proof of stabilization invariance of reduced symplectic
Khovanov homology (i.e.~\cite[Theorem 7.25]{HEquivariant}). So, reduced
symplectic Khovanov homology is invariant under isotopies and
handleslides, but is only conjectured to be invariant under
stabilization.

\emph{Acknowledgments.} We thank Tomohiro Asano for pointing out our mistake,
Nick Addington for helpful conversations, and the referee for further
corrections and suggestions.

\section{The corrected lemma}
Recall that the curves we consider in the cylindrical formulation of
symplectic Khovanov homology are maps
\[
(7.9)\qquad\!\!
  \psi\co(X,\bdy X)\to \bigl(\RR\times[0,1]\times S,(\RR\times\{0\}\times (\Sigma'_{A_1}\cup\dots\cup \Sigma'_{A_n}))\cup(\RR\times\{1\}\times (\Sigma_{B_1}\cup\dots\cup \Sigma_{B_n}))\bigr),
\]
where $X$ is a Riemann surface with boundary and $2n$ boundary
punctures, $\psi$ is asymptotic to $\{-\infty\}\times[0,1]\times\x$
and $\{+\infty\}\times[0,1]\times \y$, and
$\pi_{\RR\times[0,1]}\circ\psi$ is an $n$-fold branched covering.

The incorrect Lemma 7.10 introduced the following condition for these maps:
\begin{enumerate}[label=(YC)]
\item\label{item:YC} \ \ \  The map $
  (\Id \times\Id \times i)\circ \psi \co (X\setminus B(\psi))\to \RR\times[0,1]\times\CC
  $
  is an embedding.
\end{enumerate}

The following is a corrected version of Lemma 7.10:

\noindent\textbf{Lemma 7.10${}^{\boldsymbol{\prime}}$} {\em
  The set of holomorphic disks in the complex manifold $\ssspace{n}$ connecting
  $\x$ to $\y$ and which are transverse to the big diagonal $\Delta$ is in
  bijection with the set of holomorphic maps as in Formula (7.9)
  satisfying condition (YC).}

\begin{remark}
  To emphasize, the corrected Lemma 7.10${}^{\prime}$ is about a
  complex structure on $S$ and the induced complex structure on
  $\ssspace{n}$, not an almost complex structure on $S$ or
  $\ssspace{n}$. In particular, there is no assertion that the moduli
  spaces in Lemma 7.10${}^{\prime}$ are transversely cut out.  The
  corrected Lemma 7.10${}^{\prime}$ is not used in the rest of this
  note.
\end{remark}

\section{Corrected proof of equivariant stabilization invariance}
\subsection{Background}\label{sec:background}
\subsubsection{Skein triangles}
The corrected proof of stabilization invariance is similar to our
proof of stabilization invariance for the equivariant Heegaard Floer
homology of branched double covers from~\cite[Theorem
1.24]{HEquivariant}. Here are the analogues for symplectic Khovanov
homology of the results about Heegaard Floer homology that proof used:

\begin{theorem}\cite{SeidelSmith6:Kh-symp}\cite{Waldron:KhSympMaps}\label{thm:SS-invt}
  Symplectic Khovanov homology is a link invariant.
\end{theorem}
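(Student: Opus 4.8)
The statement is quoted from the cited papers, so the task is to recall the structure of the argument one would reconstruct. Recall that $\KhSymp(L)$ is defined by presenting $L$ as the plat closure of a braid $\beta$ on $2n$ strands, forming the associated exact symplectic manifold $\ssspace{n}$ (an affine piece of the relative Hilbert scheme of a Milnor fiber, convex at infinity), and setting $\KhSymp(L)=\HF(\beta(L_\wp),L_{\wp'})$, the Lagrangian Floer homology of a pair of Lagrangians built as iterated products of vanishing cycles associated to the two crossingless matchings, with $\beta$ acting by symplectomorphisms. Invariance then has two ingredients: (a) independence of the auxiliary Floer data --- a compatible almost complex structure and a Hamiltonian perturbation --- and (b) independence of the chosen braid presentation, i.e.\ invariance under the Markov moves: braid-group conjugation $\beta\mapsto\gamma\beta\gamma^{-1}$ and positive/negative stabilization $\beta\mapsto\beta\sigma_{2n}^{\pm1}$.

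For (a) one runs the standard continuation argument: two admissible choices of data are joined by a path, the parametrized moduli spaces give a chain homotopy equivalence of Floer complexes, and the requisite compactness (no disc bubbling, no escape to the noncompact ends) is supplied by exactness of the Lagrangians together with convexity of $\ssspace{n}$ at infinity. For the conjugation move one uses that changing $\beta$ to $\gamma\beta\gamma^{-1}$ replaces the Lagrangian configuration by its image under a symplectomorphism of $\ssspace{n}$ that is Hamiltonian-isotopic, through admissible symplectomorphisms, to a composition of the generating Dehn-twist-type symplectomorphisms attached to $\gamma$; these preserve Floer homology. This is exactly Seidel--Smith's construction of the braid group action at the Fukaya-categorical level.

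The genuinely hard step is stabilization invariance, because the ambient manifold jumps from $\ssspace{n}$ to $\ssspace{n+1}$ and the number of vanishing-cycle factors grows by one. One arranges the stabilized configuration so that, near the new strand, the extra vanishing cycle meets its partner transversally in a single point inside a standard local model; a neck-stretching / degeneration argument along this region then shows that every rigid holomorphic disc in $\ssspace{n+1}$ restricts to a rigid disc in $\ssspace{n}$ and conversely, yielding a chain isomorphism --- up to the overall grading shift that the normalization of $\KhSymp$ is designed to absorb, so that positive and negative stabilizations give the same answer. Making this isomorphism canonical (so the resulting group is independent of the chain of Markov moves relating two presentations) is cleanest via functoriality: Waldron's construction of maps $\KhSymp(\Sigma)\co\KhSymp(L_0)\to\KhSymp(L_1)$ for link cobordisms $\Sigma$, satisfying the composition and identity axioms, yields well-definedness by applying it to the trivial cobordism. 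I expect the main obstacle to be precisely this stabilization analysis: controlling the Floer theory under the change of ambient manifold, in particular proving the degeneration of the disc count and ruling out new discs that break off along the stabilizing region, which needs the explicit local model together with the exactness and convexity bounds noted above.
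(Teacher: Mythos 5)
The paper does not prove this theorem --- it is quoted as a black box from Seidel--Smith and Waldron --- so there is no internal argument to compare yours against; I can only assess your reconstruction of the cited literature. Your sketch is broadly reasonable in shape: continuation maps give auxiliary-data independence, controlled by exactness of the Lagrangians and convexity at infinity; conjugation invariance follows from the braid-group/symplectomorphism action; and stabilization is the genuinely hard step. Two cautions. First, in the present paper the operative formulation is via bridge diagrams and the Lagrangians $\CipLag_A,\CipLag_B$, with the moves being isotopies, handleslides, and bridge stabilizations; the plat/Markov picture you describe is equivalent but is not the one used downstream (the entire correction is about reproving \emph{equivariant} stabilization invariance in the bridge framework, so the translation between the two pictures matters). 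Second, Seidel--Smith's stabilization argument is a fibered degeneration of the Lefschetz/vanishing-cycle geometry rather than SFT-style neck-stretching, and the role of Waldron's paper in the citation is primarily the K\"unneth/disjoint-union theorem (which this paper uses explicitly) and functoriality infrastructure, not the canonicity-via-trivial-cobordism argument you emphasize. Your outline captures the right shape of the external proof but blurs some of the details of which reference contributes what.
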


\begin{figure}
  \centering
  \includegraphics[scale=.75]{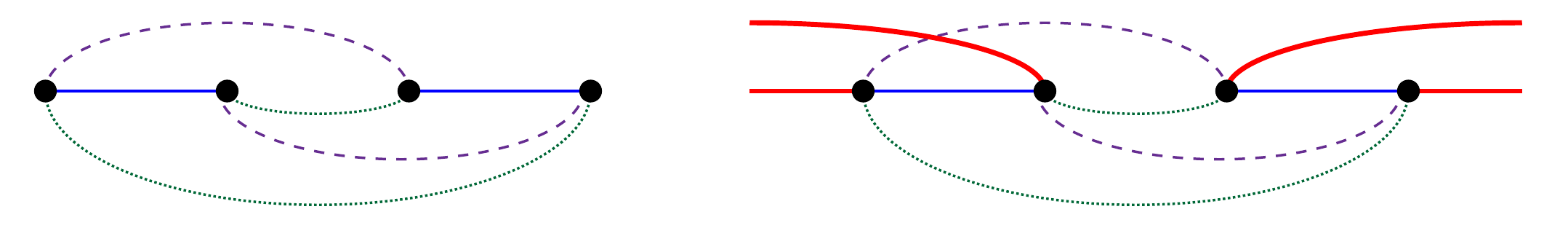}
  \caption{\textbf{Skein triangle via bridges.} Left: the bridges $B$ (\textcolor{blue}{solid}), $B'$ (\textcolor{darkgreen}{dotted}), and $B''$ (\textcolor{mypurple}{dashed}). Right: the same picture, but with $A$-arcs (\textcolor{red}{thick}) to illustrate why $(B,B',B'')$ correspond to a skein relation.}
  \label{fig:skein-bridges}
\end{figure}

\begin{theorem}\label{thm:skein-tri}\cite{AbouzaidSmith:KhSympKh}
  Let $B$, $B'$, and $B''$ be collections of bridges which differ as
  shown in Figure~\ref{fig:skein-bridges} in part of the diagram and
  are small isotopic copies of each other in the rest of the
  diagram. For any collection
  of bridges $A$ there is an exact triangle
  \[
    \xymatrix{
      & \HF(\CipLag_A,\CipLag_B)\ar[dr] & \\
      \HF(\CipLag_A,\CipLag_{B''})\ar[ur] & & \HF(\CipLag_A,\CipLag_{B'})\ar[ll]
    }
  \]
  Further, there are elements
  $\alpha\in\CF(\CipLag_B,\CipLag_{B'})$,
  $\beta\in\CF(\CipLag_{B'},\CipLag_{B''})$, and
  $\gamma\in\CF(\CipLag_{B''},\CipLag_{B})$ so that the maps in the
  exact triangle come from counting holomorphic triangles on
  $(\CipLag_A,\CipLag_B,\CipLag_{B'})$,
  $(\CipLag_A,\CipLag_{B'},\CipLag_{B''})$, and
  $(\CipLag_A,\CipLag_{B''},\CipLag_{B})$ with one corner at $\alpha$,
  $\beta$, and $\gamma$, respectively.
\end{theorem}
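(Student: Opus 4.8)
The plan is to realize the exact triangle as the image, under the $A_\infty$-module $\HF(\CipLag_A,-)$, of an exact triangle of objects in the Fukaya category of $\ssspace{n}$, in which $\CipLag_{B''}$ is a mapping cone of a canonical morphism relating $\CipLag_B$ and $\CipLag_{B'}$ (and, cyclically, each of the three Lagrangians is a cone on a morphism between the other two). This mirrors the structure of Khovanov's original functor-valued tangle invariant, in which the invariant of a crossing is the cone of a natural transformation between the invariants of its two resolutions; here the transformation is the Floer-theoretic incarnation of the generator $\alpha$, and the cone corresponds geometrically to a Lagrangian connected sum.

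First I would set up the local model. The collections $B$, $B'$, $B''$ agree outside a disk and, inside it, are the three standard pieces of a skein triple: two arcs meeting transversely in one point together with their two smoothings. Tracing the iterated vanishing-cycle construction of $\CipLag_\bullet$ from a collection of bridges, the two arcs that differ lift to Lagrangian $2$-spheres which, in an $A_k$ Milnor-fiber local model (after stabilization just $T^*S^2$), meet transversely in a single point $q$. The main geometric claim to prove is that $\CipLag_{B''}$ is Hamiltonian isotopic to the Lagrangian surgery $\CipLag_B \#_q \CipLag_{B'}$ (equivalently, to $\tau_{\CipLag_{B'}}(\CipLag_B)$ for the Dehn twist along the relevant sphere factor), and that $q$ represents the generator $\alpha \in \CF(\CipLag_B,\CipLag_{B'})$. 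Symmetrically one reads off $\beta$ and $\gamma$ from the other two pairs of arcs; the three intersection points land in the three cyclically-ordered pairs precisely because the three local arcs meet pairwise in one point.

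Next I would globalize and produce the triangle. Since $\ssspace{n}$ is an exact symplectic manifold and the $\CipLag$'s are exact (simply connected) Lagrangians, there is no disk or sphere bubbling, so the Lagrangian-surgery long exact sequence of Fukaya--Oh--Ohta--Ono -- equivalently Seidel's exact triangle for Dehn twists -- applies once the surgery is done with a sufficiently small neck, giving an exact triangle $\CipLag_{B''}\to\CipLag_B\to\CipLag_{B'}\to\CipLag_{B''}[1]$ in the Fukaya category. To see that the identification made in $T^*S^2$ suffices -- i.e.\ that only the local model is involved -- I would either split off a local $T^*S^2$ factor by a symplectic neighborhood argument near the relevant Lagrangians, or perform a neck-stretch around the skein disk so that every holomorphic polygon contributing to the relevant $\mu_k$'s is confined to the local model (using a priori energy and index bounds and Gromov compactness). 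Applying $\HF(\CipLag_A,-)$ to the exact triangle then yields the long exact sequence; and, by the construction of the cone in an $A_\infty$-category, the connecting maps are post-composition ($\mu_2$) with the morphisms $\alpha$, $\beta$, $\gamma$, which at the chain level are exactly counts of holomorphic triangles on $(\CipLag_A,\CipLag_B,\CipLag_{B'})$, $(\CipLag_A,\CipLag_{B'},\CipLag_{B''})$, and $(\CipLag_A,\CipLag_{B''},\CipLag_B)$ with one corner at $\alpha$, $\beta$, $\gamma$.

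I expect the main obstacle to be twofold: (i) the geometric identification $\CipLag_{B''}\simeq\CipLag_B \#_q \CipLag_{B'}$, which requires a coordinate-level analysis of how the branched-cover/vanishing-cycle construction transforms under the skein modification and a matching with the standard local model of Lagrangian surgery; and (ii) the localization step, since $\ssspace{n}$ is not globally a product of the skein ball's local model with a complement, so one must show that the Floer theory of these products-of-spheres Lagrangians decouples near the skein ball. Once those are in place, checking the exactness hypotheses of the surgery triangle and matching its maps with the triangle counts is formal.
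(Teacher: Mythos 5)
Your approach is genuinely different from the paper's, and it is worth comparing them. The paper avoids the geometry entirely: it cites \cite[Proposition 7.4]{AbouzaidSmith:KhSympKh}, where Abouzaid--Smith (via an Abouzaid--Ganatra-style argument) produce an exact triangle of \emph{bimodules} over the Fukaya category of $\ssspace{n}$ relating the identity bimodule, the cup-cap bimodule, and the half-twist bimodule. Plugging $\CipLag_B$ into the first slot yields one-sided modules equivalent to $\CipLag_B$, $\CipLag_{B'}$, $\CipLag_{B''}$, and since equivalences preserve exact triangles one immediately gets the claimed triangle, with the morphisms realized as Floer cochains $\alpha,\beta,\gamma$ and the maps as $\mu_2$-counts of triangles. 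That is the entire proof. You instead propose to establish the object-level exact triangle directly via Lagrangian surgery (or Seidel's Dehn-twist sequence) in the local model, then globalize. This is more hands-on, doesn't pass through bimodules, and if it worked would give somewhat more geometric information; the paper's route is far shorter because it leans on a nontrivial published result.

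The gap in your proposal lies in the geometric identification at the heart of step (i). You claim that the two arcs that differ ``lift to Lagrangian $2$-spheres which \dots meet transversely in a single point $q$'' and that $\CipLag_{B''}$ is (isotopic to) the surgery $\CipLag_B \#_q \CipLag_{B'}$, ``equivalently, to $\tau_{\CipLag_{B'}}(\CipLag_B)$.'' But the $\CipLag_\bullet$ are not spheres --- in the Hilbert-scheme model they are products $\Sigma_{C_1}\times\cdots\times\Sigma_{C_n}$ of Lagrangian spheres, one factor per bridge --- so the Dehn twist $\tau_{\CipLag_{B'}}$ is not defined, and Seidel's exact sequence does not apply as stated. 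Moreover, in the skein move of Figure~\ref{fig:skein-bridges} \emph{two} bridges change simultaneously, so $\CipLag_B$ and $\CipLag_{B'}$ differ in two sphere factors; their intersection pattern is not a single transverse point, and the one-point Lagrangian-surgery cone triangle (FOOO, or Seidel via a single Dehn twist) does not directly produce the exact triangle you want. To push your approach through you would essentially need to reconstruct the fiberwise/bimodule-level argument of Abouzaid--Smith --- identifying the cup, cap, and half-twist functors and proving the cone relation among them --- at which point you have recreated the cited Proposition 7.4 rather than given a shortcut to it. Your step (ii), the localization to the skein ball, is also not free: $\ssspace{n}$ is not a product near the skein region, and the usual neck-stretching arguments require careful control of projected domains; the paper handles the analogous issue elsewhere (Proposition~\ref{prop:disjoint-union}) by deforming the defining polynomial and tracking convexity, not by a simple splitting.
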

\begin{proof}
  Using ideas of Abouzaid-Ganatra, Abouzaid-Smith show that there is
  an exact triangle of bimodules over the Fukaya category of
  $\ssspace{n}$ relating the identity bimodule $\Id$, the composition
  of a cup and a cap, and the half-twist $\tau$; see~\cite[Proposition
  7.4]{AbouzaidSmith:KhSympKh}. Evaluating these three bimodules on
  $\CipLag_B$ as the first object gives three one-sided
  modules---$\CipLag_B$, a module equivalent to $\CipLag_{B'}$, and a
  module equivalent to $\CipLag_{B''}$. Since equivalences preserve
  exact triangles, this implies that there is an exact triangle
  relating $\CipLag_B$, $\CipLag_{B'}$, and $\CipLag_{B''}$. In
  particular, the maps $\CipLag_B\to\CipLag_{B'}$,
  $\CipLag_{B'}\to\CipLag_{B''}$, and $\CipLag_{B''}\to\CipLag_B$ come
  from elements $\alpha\in\CF(\CipLag_B,\CipLag_{B'})$,
  $\beta\in\CF(\CipLag_{B'},\CipLag_{B''})$, and
  $\gamma\in\CF(\CipLag_{B''},\CipLag_{B})$, so that for any other
  Lagrangian $L$, counting holomorphic triangles on
  $(L,\CipLag_B,\CipLag_{B'})$ with a corner at $\alpha$ (respectively
  on $(L,\CipLag_{B'},\CipLag_{B''})$ with a corner at $\beta$, on
  $(L,\CipLag_{B''},\CipLag_{B})$ with a corner at $\gamma$) gives an
  exact triangle relating $\HF(L,\CipLag_B)$, $\HF(L,\CipLag_{B'})$,
  and $\HF(L,\CipLag_{B''})$. Taking $L=\CipLag_A$ gives the result.
\end{proof}

\begin{remark}
  For appropriate choices of diagrams, all of the
  generators of $\CF(\CipLag_B,\CipLag_{B'})$,
  $\CF(\CipLag_{B'},\CipLag_{B''})$, and
  $\CF(\CipLag_{B''},\CipLag_{B})$ are fixed by the $O(2)$-action.
\end{remark}

\subsubsection{Projected domains}
While the arguments below take place in the Hilbert scheme, and avoid
a cylindrical formulation of symplectic Khovanov homology, we will
make use of the concept and some properties of projected domains
from~\cite[Section 7.1.3]{HEquivariant}. Fix a bridge diagram
$(A=\{A_i\},B=\{B_i\})$ for a link $L$. Choose a point $z_i$ in each
connected component of $\CC\setminus (A\cup B)$. Each $z_i$ gives a
subvariety $\pi^{-1}(z_i)\times\Hilb^{n-1}(S)$ of $\Hilb^n(S)$
consisting of those length-$n$ subschemes where at least one point
lies over $z_i$. Fix a neighborhood $U_i$ of
$\pi^{-1}(z_i)\times\Hilb^{n-1}(S)$, small enough that the closure of
$U_i$ is disjoint from $\CipLag_A\cup\CipLag_B$.

Let $R$ denote the non-compact region of
$\CC\setminus(A\cup B)$.  Then $\pi^{-1}(R)\times\Hilb^{n-1}(S)$, the
set of length-$n$ subschemes where at least one point lies over $R$,
is an open subset of $\Hilb^n(S)$. Order the $z_i$ above so that $z_0\in R$.

We will only consider almost complex structures $J$ on $\ssspace{n}$,
compatible with the symplectic form described in
Section~\ref{sec:convex} below, with the following three additional
properties:
\begin{enumerate}[label=(J-\arabic*)]
\item\label{item:J-first} The almost complex structure $J$ agrees with the standard
  complex structure $\Hilb^n(j)$ on $U_i$. In particular, each
  $(\pi^{-1}(z_i)\times\Hilb^{n-1}(S))\cap\ssspace{n}$ is a
  $J$-holomorphic submanifold.
\item The almost complex structure $J$ agrees with the standard
  complex structure $\Hilb^n(j)$ on $\pi^{-1}(\overline{R})\times\Hilb^{n-1}(S)$.
\item\label{item:J-convex}\label{item:J-last} The almost complex structure $J$ agrees
  with the standard complex structure $\Hilb^n(j)$ outside a compact
  subset. (This is not implied by the previous restriction because the
  fibers of $\pi$ are themselves non-compact.)
\end{enumerate}
(Compare~\cite[Definition 3.1]{OS04:HolomorphicDisks}.)

Since $(\pi^{-1}(z_i)\times\Hilb^{n-1}(S))\cap\ssspace{n}$ is proper
and disjoint from $\CipLag_A\cup\CipLag_B$, each
$(\pi^{-1}(z_i)\times\Hilb^{n-1}(S))\cap\ssspace{n}$ is dual to a
relative cohomology class
$PD[\pi^{-1}(z_i)\times\Hilb^{n-1}(S)]\in
H^2(\ssspace{n},\CipLag_A\cup\CipLag_B)$.  Given a Whitney disk $u$
for $(\CipLag_A,\CipLag_B)$, let $n_{z_i}(u)$ be the result of
evaluating the cohomology class
$PD[\pi^{-1}(z_i)\times\Hilb^{n-1}(S)]$ on $[u]$. The tuple
$(n_{z_i}(u))$ is the \emph{projected domain} of $u$. Sometimes, we
think of the projected domain as an element of
$H_2(\CC\cup\{\infty\},A\cup B)$, where $n_{z_i}(u)$ is the coefficient
of the region containing $z_i$.

If $u$ is $J$-holomorphic then the projected domain of $u$ has the following properties:
\begin{enumerate}[label=(D-\arabic*)]
\item For each $i$, $n_{z_i}(u)\geq 0$. (Compare~\cite[Lemma
  3.2]{OS04:HolomorphicDisks}.)
\item\label{item:domain-2} The value $n_{z_0}(u)=0$, and in fact $u(D^2)\cap (R\times\Hilb^{n-1}(S))=\emptyset$. 
\item If $n_{z_i}(u)=0$ for each $i$ then $u$ is constant.
\end{enumerate}
The first two statements follow from positivity of intersections of
complex submanifolds. The third follows from the fact that any such
Whitney disk is homotopic to a constant disk, hence has zero area, and
hence is itself constant.

Finally, no non-constant, $J$-holomorphic Whitney disk is
entirely contained in the subspace
\[
  \bigl(\pi^{-1}(\overline{R})\times\Hilb^{n-1}(S)\bigr)\cup \bigcup_i U_i
\]
where we have imposed constraints on $J$. So, the usual transversality
arguments for $J$-holomorphic curves (see, e.g.,~\cite[Chapter
3]{MS04:HolomorphicCurvesSymplecticTopology}) imply generic
transversality for (one parameter families of) almost complex
structures $J$ in this class.

\subsubsection{Symplectic forms and convexity at infinity}\label{sec:convex}

In~\cite{HEquivariant}, we worked in the setting of symplectic
manifolds which are convex at infinity, in the sense
of~\cite{EliashbergGromov91:convex}.  An alternative notion of
convexity comes from~\cite[Section 0.4]{Gromov85}: a symplectic
manifold $M$ is \emph{$I$-convex at infinity} (or simply
\emph{$I$-convex}) if there is an exhaustion of $M$ by open sets
$V_1\subset V_2\subset\cdots$ with $\overline{V_i}$ compact and such that if
$u\co D^2\to M$ is $I$-holomorphic with $u(\bdy D^2)\subset V_i$ then
$u(D^2)\subset V_{i+1}$. For example, the maximum modulus theorem
implies that any affine variety is $I$-convex. Observe also that the
notion of a symplectic manifold being $I$-convex depends only on $I$
near infinity (i.e., outside a compact set). As noted
in~\cite{HLS:Lie}, the arguments from~\cite{HEquivariant} work for symplectic manifolds
which are $I$-convex at infinity for some $G$-invariant almost complex
structure $I$ defined outside a compact set.

In particular, the $I$-convex setting is convenient for
symplectic Khovanov homology. Let $I$ be the complex structure on
$\ssspace{n}$ inherited from $\Hilb^n(S)$. The complex structure $I$
is $O(2)$-invariant and, since $\ssspace{n}$ is an affine variety (see
also~\cite[Theorem 1.2]{Manolescu06:nilpotent}), $\ssspace{n}$ is
$I$-convex. Inspired by Perutz's construction
in~\cite{Perutz07:HamHand}, in~\cite[Lemma
5.5]{AbouzaidSmith:arc-alg}, Abouzaid-Smith construct a K\"ahler form
$\omega'$ on $\Hilb^n(S)$ (with respect to $I$) whose restriction to
$\ssspace{n}$ is exact and agrees with the product form outside a
neighborhood of the diagonal. (In the notation of~\cite[Lemma
5.5]{AbouzaidSmith:arc-alg}, we choose $\omega$ to be an exact
K\"ahler form on $S$.)  We saw in~\cite[Lemma 4.24]{HLS:Lie} that
averaging $\omega'$ gives an $O(2)$-invariant K\"ahler form on
$\ssspace{n}$ (still with respect to $I$) which is still exact and
still agrees with the product symplectic form outside a neighborhood
of the diagonal.

If $V$ is an open subset of $\CC$, then
$\ssspace{n}\cap \Hilb^n(\pi^{-1}(V))$ is also $I$-convex, for the
same reason that point~\ref{item:domain-2}, above, holds. In
particular, complex structures satisfying
condition~\ref{item:J-convex} satisfy the convexity-at-infinity
condition required for the constructions in~\cite{HEquivariant}.

\subsection{General K\"unneth theorem for the freed Floer complex}
Another ingredient in our proof of Proposition~\ref{prop:equi-stab-inv} is
a K\"unneth theorem for equivariant symplectic Khovanov
homology. We give a general K\"unneth theorem for the equivariant
Floer complex in this section and specialize to the case of symplectic
Khovanov homology in the next section.

\begin{remark}
  Throughout this section, the convexity assumption
  from~\cite[Hypothesis 3.2]{HEquivariant} can be replaced with
  $I$-convexity for some $G$-invariant almost complex structure $I$
  defined outside a compact set. See Section~\ref{sec:convex} for
  further discussion and references.
\end{remark}

\begin{theorem}\label{thm:external-Kunneth}
  Suppose that $H$ acts on $(M,L_0,L_1)$ and $H'$ acts on
  $(M',L'_0,L'_1)$, both satisfying~\cite[Hypothesis 3.2]{HEquivariant}.
  Then the action of $H\times H'$ on $(M\times M',L_0\times L'_0,L_1\times L'_1)$
  satisfies~\cite[Hypothesis 3.2]{HEquivariant} and there is a quasi-isomorphism
  \[
    \ECF[H\times H'](L_0\times L'_0,L_1\times L'_1)\simeq
    \ECF[H](L_0,L_1)\otimes_{\Field} \ECF[H'](L'_0,L'_1)
  \]
  of chain complexes over $\Field[H\times H']$.
\end{theorem}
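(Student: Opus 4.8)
The plan is to build the quasi-isomorphism by hand from the equivariant complexes, following the standard pattern for Künneth theorems in Floer homology but being careful about the bar-type construction used to make the theory equivariant. Recall that $\ECF[H](L_0,L_1)$ is, roughly, the totalization of a homotopy-coherent diagram of copies of $\CF(L_0,L_1)$ indexed by the simplices of (a model for) $EH$, with differentials built from the $H$-equivariant structure on the moduli spaces; concretely it is a twisted complex over $\Field[H]$ whose underlying group is $C_*(EH;\Field)\otimes_{\Field}\CF(L_0,L_1)$ after a suitable choice of generic family of almost complex structures and perturbations. First I would verify the easy bookkeeping: if $(M,L_0,L_1)$ is $I$-convex at infinity for an $H$-invariant $I$ and $(M',L_0',L_1')$ is $I'$-convex for an $H'$-invariant $I'$, then $(M\times M', L_0\times L_0', L_1\times L_1')$ is $(I\times I')$-convex for the $(H\times H')$-invariant structure $I\times I'$ — using the exhaustions $V_i\times V_i'$ and the maximum principle applied to the two projections of an $(I\times I')$-holomorphic disk — so that Hypothesis~3.2 holds for the product and the equivariant complex is defined.

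Next I would establish the non-equivariant Künneth statement in the form needed: for a product almost complex structure $J\times J'$ on $M\times M'$ (with $J$, $J'$ generic), every index-$0$ or index-$1$ $(J\times J')$-holomorphic strip from $(x,x')$ to $(y,y')$ splits as a product of a $J$-holomorphic strip from $x$ to $y$ and a $J'$-holomorphic strip from $x'$ to $y'$ — this is the usual argument that the two projections are holomorphic, their energies add, and the dimension count forces each factor to have index $0$ (with the other the constant strip) — giving a chain isomorphism $\CF_{J\times J'}(L_0\times L_0', L_1\times L_1')\cong \CF_J(L_0,L_1)\otimes_{\Field}\CF_{J'}(L_0',L_1')$. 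One then has to check this splitting is compatible with the higher coherence data: for product Hamiltonian/almost-complex data indexed by a product of simplices, the moduli spaces of equivariant strips again split as products, because the equivariant perturbation data on $M\times M'$ can be chosen of product form parametrized by $EH\times EH' \simeq E(H\times H')$. Taking cellular chains, $C_*(E(H\times H'))\simeq C_*(EH)\otimes C_*(EH')$ as $\Field[H\times H']$-complexes (Eilenberg–Zilber), and this is exactly what converts the product of twisted complexes into the twisted complex over $\Field[H\times H']$ computing $\ECF[H\times H']$.

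The main obstacle is that the equivariant complex is only defined for a \emph{generic} family of data, and product data is never generic: the split almost complex structures $J\times J'$ fail transversality for the product moduli spaces (a constant factor is never regular in the naive sense, and more importantly the diagonal-type strata are not cut out transversely), and the same is true for the parametrized families. The fix is the standard one — I would first compute $\ECF[H\times H']$ using product data (where the moduli spaces, though not regular, still split and can be identified with fiber products of regular moduli spaces on the factors, so the count is well-defined and gives the tensor product complex), and then invoke the independence of $\ECF$ up to quasi-isomorphism on the choice of generic data (which holds because any two choices are connected by a generic path, from~\cite[Hypothesis 3.2]{HEquivariant} and the associated invariance machinery) to pass from the product data to a genuinely generic choice. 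Thus the chain of maps
\[
  \ECF[H\times H'](L_0\times L_0', L_1\times L_1')_{\text{product data}}
  \;\cong\;
  \ECF[H](L_0,L_1)\otimes_{\Field}\ECF[H'](L_0',L_1')
\]
followed by the invariance quasi-isomorphism to a generic model gives the claimed quasi-isomorphism over $\Field[H\times H']$. A secondary technical point to dispatch along the way: one must know the product moduli spaces are compact and the fiber-product identification respects orientations/signs — but over $\Field = \FF_2$ signs are irrelevant, and compactness follows from Gromov compactness together with the $(I\times I')$-convexity established in the first step, so no disk or strip escapes to infinity.
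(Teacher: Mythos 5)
Your overall strategy (use product almost-complex data indexed by $\ECat H\times \ECat H'\cong\ECat(H\times H')$, then invoke an Eilenberg--Zilber type comparison of the resulting homotopy colimits) is the same shape as the paper's argument, and the paper does in fact define the comparison map $\eta$ as a shuffle map, which is one half of the Eilenberg--Zilber package you gesture at. But there are two substantive problems.

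First, your transversality discussion is backwards, and this matters. You assert that product data is ``never generic'' --- that constant factors fail regularity --- and propose to ``compute with non-regular data and then invoke invariance.'' In Lagrangian Floer theory with transverse Lagrangian intersections, constant strips \emph{are} regular, and the paper's key analytic observation is the opposite of yours: for a shuffle $(g_1,\dots,g_{k+\ell})$ mixing morphisms from both factors, the family $FF'(g_{k+\ell},\dots,g_1)$ depends on only $k+\ell-2$ of the $k+\ell-1$ parameters, so the index $1-(k+\ell)$ moduli spaces are \emph{empty} --- which is as transverse as one can be. The only remaining transversality issue is that the $F$- and $F'$-moduli spaces must intersect transversely inside the cube $[0,1]^k$, and this is arranged by a small perturbation of $F$ and $F'$ that does not change $G$ or $G'$. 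Without this vanishing observation, your proposed fix (define the equivariant complex with non-generic data, then invoke invariance) is not well-founded: the complex $\ECF$ is only defined for generic diagrams, so there is nothing to ``compute'' with non-generic product data, and invariance cannot be applied to an undefined object.

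Second, you do not actually verify that the comparison map is a chain map. The paper's proof spends most of its length on exactly this: writing $\eta$ explicitly as a sum over shuffles and checking cancellation of the six families of terms in $\eta\circ\bdy$ against the three families of terms in $\bdy\circ\eta$, which is where the vanishing of the mixed-shuffle moduli spaces is actually used. The quasi-isomorphism claim at the end then follows from a commutative square comparing with the underlying nonequivariant complexes $G(h)\otimes G'(h')\cong GG'(h\times h')$. These are the load-bearing steps of the argument, and your proposal omits them. Your opening observation that the product is $(I\times I')$-convex, giving Hypothesis~3.2 for $M\times M'$, is correct and is the one point where you add something the paper leaves implicit.
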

\begin{proof}
  To keep notation simple, we will prove the result in the case that
  $L_0\pitchfork L_1$ and $L'_0\pitchfork L'_1$; the extension to
  non-transverse intersections is the same as~\cite[Section
  3.6]{HEquivariant}.
  
  Observe that if $\wt{J}$ (respectively $\wt{J}'$) is an eventually cylindrical
  almost complex structure on $M$ (respectively $M'$) so that the moduli spaces
  of $\wt{J}$-holomorphic Whitney disks in $M$ (respectively
  $\wt{J}'$-holomorphic Whitney disks in $M'$) are transversely cut out then the
  moduli space of $(\wt{J}\times\wt{J}')$-holomorphic Whitney disks in
  $M\times M'$ are transversely cut out. More generally, given $k$-parameter
  families $\wt{J}(t_1,\dots,t_k)$ and $\wt{J}'(t_1,\dots,t_k)$ of eventually
  cylindrical almost complex structures on $M$ and $M'$, $t_i\in[0,1]$, the
  moduli space of holomorphic Whitney disks with respect to the $k$-parameter
  family $(\wt{J}\times\wt{J}')$ is tranvsersally cut out if the moduli spaces
  with respect to $\wt{J}$ and $\wt{J}'$ and transversally cut out and intersect
  transversally in $[0,1]^k$, in which case the moduli space with respect to
  $(\wt{J}\times\wt{J}')$ is the fiber product, over $[0,1]^k$, of the moduli
  spaces with respect to $\wt{J}$ and $\wt{J}'$.
 
  Next, observe that $\ECat(H\times H')=(\ECat H)\times (\ECat H')$.  

  Now, fix sufficiently generic homotopy coherent diagrams
  $F\co \ECat H\to \ol{\JSpace}_M$ and
  $F'\co \ECat H'\to \ol{\JSpace}_{M'}$. Construct a homotopy coherent
  diagram $FF'\co \ECat(H\times H')\to \ol{\JSpace}_{M\times M'}$ as
  follows. On objects, define $FF'(h,h')=F(h)\times F'(h')$. More generally, define
  \begin{multline*}
    FF'((f_n,f'_n),\dots,(f_1,f'_1))(t_1,\dots,t_{n-1})\\
    =[F(f_n,\dots,f_1)(t_1,\dots,t_{n-1})]\times
    [F'(f'_n,\dots,f'_1)(t_1,\dots,t_{n-1})].
  \end{multline*}
  Perturbing $F$ and $F'$ slightly, we may assume that the moduli spaces with respect to
  the family of almost complex structures $F'(f'_n,\dots,f'_1)$ are transverse
  to the moduli spaces with respect to $F(f_n,\dots,f_1)$, so $FF'$ is
  sufficiently generic. Since $F$ and $F'$ were already generic, this perturbation does
  not change the functors $G\co \ECat H\to \Complexes$,
  $G'\co \ECat H'\to \Complexes$.

  Given a
  sequence of morphisms
  $h_0\stackrel{f_1}{\longrightarrow}
  h_1\stackrel{f_2}{\longrightarrow}\cdots\stackrel{f_k}{\longrightarrow}
  h_k$ in $\ECat H$ and a sequence of morphisms
  $h'_0\stackrel{f'_1}{\longrightarrow}
  h'_1\stackrel{f'_2}{\longrightarrow}\cdots\stackrel{f'_\ell}{\longrightarrow}
  h'_\ell$ in $\ECat H'$, a \emph{shuffle} of these sequences is a
  sequence of morphisms
  $(h_0,h'_0)\stackrel{g_0}{\longrightarrow}\cdots\stackrel{g_{k+\ell}}{\longrightarrow}(h_k,h'_\ell)$,
  where each $g_i$ either has the form $(f_j,\Id)$ or
  $(\Id,f'_j)$, and the morphisms $f_1,\dots,f_k$ appear in
  order, once each, in this sequence, as do $f'_1,\dots,f'_\ell$. For
  example, the three shuffles of
  $h_0\stackrel{f_1}{\longrightarrow}h_1\stackrel{f_2}{\longrightarrow}h_2$
  and $h'_0\stackrel{f'_1}{\longrightarrow}h'_1$ are
  \begin{align*}
    &(h_0,h'_0)\stackrel{f_1\times\Id}{\longrightarrow}(h_1,h'_0)\stackrel{f_2\times\Id}{\longrightarrow}(h_2,h'_0)\stackrel{\Id\times f'_1}{\longrightarrow} (h_2,h'_1)\\
    &(h_0,h'_0)\stackrel{f_1\times\Id}{\longrightarrow}(h_1,h'_0)\stackrel{\Id\times f'_1}{\longrightarrow}(h_1,h'_1)\stackrel{f_2\times\Id}{\longrightarrow} (h_2,h'_1)\\
    &(h_0,h'_0)\stackrel{\Id\times f'_1}{\longrightarrow}(h_0,h'_1)\stackrel{f_1\times\Id}{\longrightarrow}(h_1,h'_1)\stackrel{f_2\times\Id}{\longrightarrow} (h_2,h'_1).
  \end{align*}
  The shuffles correspond to permutations $\sigma\in S_{k+\ell}$ so
  that $\sigma|_{\{1,\dots,k\}}$ and $\sigma|_{\{k+1,\dots,k+\ell\}}$
  are increasing.

  Notice that if $(g_1,\dots,g_{k+\ell})$ is a shuffle then the moduli spaces of
  index $-k-\ell+1$ with respect to $FF'(g_{k+\ell},\dots,g_{1})$ are empty unless
  either $k=0$ or $\ell=0$. Indeed, the family
  of almost complex structures $FF'(g_{k+\ell},\dots,g_{1})$
  factors through a map to $[0,1]^{k+\ell-2}$,
  so
  Maslov index $1-k-\ell$ moduli spaces are empty. The exception is if $k=0$
  (respectively $\ell=0$), in which case the moduli space is identified with the
  moduli space of $F'(f'_\ell,\dots,f'_1)$-holomorphic disks (respectively
  $F(f_k,\dots,f_1)$-holomorphic disks), multiplied by constant disks in the
  other factor.

  Let $G\co \ECat H\to \Complexes$, $G'\co \ECat H'\to\Complexes$, and
  $GG'\co\ECat (H\times H')\to\Complexes$ be the homotopy coherent diagrams
  corresponding to $F$, $F'$, and $FF'$, respectively. With notation as
  in~\cite[Definition 3.11]{HEquivariant}, define a map
  \[
    \eta\co (\hocolim G)\otimes (\hocolim G')\to \hocolim GG'
  \]
  by
  \begin{multline*}
    \eta\bigl( (f_k,\dots,f_1;\{0,1\}^{\otimes k};x)\otimes(f'_\ell,\dots,f'_1;\{0,1\}^{\otimes \ell};y) \bigr)
    =\hspace{-2em}\sum_{\text{shuffles }g_1,\dots,g_{k+\ell}}\hspace{-2em}\bigl(g_{k+\ell},\dots,g_1;\{0,1\}^{\otimes k+\ell};x\otimes y\bigr).
  \end{multline*}
  We verify that $\eta$ is a chain map. The terms arising from taking
  the differential of $x$ or $y$ before or after applying $\eta$
  clearly cancel in pairs, so we will ignore them from here on. 

  The remaining terms in $\eta\circ\bdy$ are:
  \begin{align*}
    &\sum_{i=1}^{k}\eta\bigl( (f_k,\dots,f_1;\{0,1\}^{\otimes i-1}\otimes 0\otimes\{0,1\}^{k-i};x)\otimes(f'_\ell,\dots,f'_1;\{0,1\}^{\otimes \ell};y) \bigr)\\
      &\qquad\qquad\qquad+
      \eta\bigl( (f_k,\dots,f_1;\{0,1\}^{\otimes i-1}\otimes 1\otimes\{0,1\}^{k-i}\otimes ;x)\otimes(f'_\ell,\dots,f'_1;\{0,1\}^{\otimes \ell};y) \bigr)\\
    &\qquad+
    \sum_{i=1}^{\ell}\eta\bigl( (f_k,\dots,f_1;\{0,1\}^{\otimes k} ;x)\otimes(f'_\ell,\dots,f'_1;\{0,1\}^{\otimes i-1}\otimes 0\otimes\{0,1\}^{\otimes \ell-i} ;y) \bigr)
    \\
    &\qquad\qquad\qquad+\eta\bigl( (f_k,\dots,f_1;\{0,1\}^{\otimes k} ;x)\otimes(f'_\ell,\dots,f'_1;\{0,1\}^{\otimes i-1}\otimes 1\otimes\{0,1\}^{\otimes \ell-i} ;y) \bigr)\\
    &=
      \sum_{i=1}^{k-1}\eta\bigl( (f_k,\dots,f_{i+1};\{0,1\}^{k-i};G(f_i,\dots,f_1)(\{0,1\}^{i-1}\otimes x)\otimes(f'_\ell,\dots,f'_1;\{0,1\}^{\otimes \ell};y) \bigr)\\
      &\qquad\qquad\qquad+
        \eta\bigl( (f_k,\dots,f_{i+1}\circ f_i,\dots, f_1;\{0,1\}^{\otimes k-1};x)\otimes(f'_\ell,\dots,f'_1;\{0,1\}^{\otimes \ell};y) \bigr)\\
    &\qquad+ \eta\bigl( (f_{k-1},\dots, f_1;\{0,1\}^{\otimes k-1};x)\otimes(f'_\ell,\dots,f'_1;\{0,1\}^{\otimes \ell};y) \bigr)\\
    &\qquad+
    \sum_{i=1}^{\ell}\eta\bigl( (f_k,\dots,f_1;\{0,1\}^{\otimes k} ;x)\otimes(f'_\ell,\dots,f'_{i+1};\{0,1\}^{\otimes \ell-i} ;G'(f'_i,\dots,f'_1)(\{0,1\}^{i-1}\otimes y) \bigr)
    \\
    &\qquad\qquad\qquad+\eta\bigl( (f_k,\dots,f_1;\{0,1\}^{\otimes k} ;x)\otimes (f'_\ell,\dots,f'_{i+1}\circ f'_i,\dots,f'_1;\{0,1\}^{\otimes \ell-1};y) \bigr)\\
    &\qquad+\eta\bigl( (f_k,\dots,f_1;\{0,1\}^{\otimes k} ;x)\otimes (f'_{\ell-1},\dots,f'_1;\{0,1\}^{\otimes \ell-1};y) \bigr).
  \end{align*} 
  Call the terms on the six lines type A1--A6.

  The remaining terms in $\bdy\circ\eta$ are
  \begin{align*}
    \sum_{\text{shuffles }g_1,\dots,g_{k+\ell}}
      &\sum_{i=1}^{k+\ell-1}\bigl(g_{k+\ell},\dots,g_{i+1};\{0,1\}^{k+\ell-i};GG'(g_i,\dots,g_1)(\{0,1\}^{i-1}\otimes (x\otimes y))\bigr)\\
      &\qquad+
        (g_{k+\ell},\dots,g_{i+1}\circ g_i,\dots, g_1;\{0,1\}^{\otimes k+\ell-1};(x\otimes y))\\
    &+(g_{k+\ell-1},\dots, g_1;\{0,1\}^{\otimes k+\ell-1};(x\otimes y)).
  \end{align*}
  Call the terms on the three lines type B1--B3.
  \begin{itemize}
  \item For type B1 terms, if some $g_j$, $1\leq j\leq i$, is of the
    form $f\otimes\Id$ while another $g_{j'}$, $1\leq j'\leq i$, is of
    the form $\Id\otimes f'$ then $GG'(g_i,\dots,g_1)=0$, because the
    corresponding $FF'$-moduli spaces are empty. The remaining type B1
    terms cancel with the type A1 and type A4 terms.
  \item For type B2 terms, if $g_{i+1}=f_j\otimes \Id$ and
    $g_i=\Id\otimes f_{j'}'$ then this term cancels with the
    corresponding term of the shuffle $(g'_1,\dots,g'_{k+\ell})$ which
    agrees with $(g_1,\dots,g_{k+\ell})$ except that
    $g'_{i}=f_{j}\otimes \Id$ and $g'_{i+1}=\Id\otimes f'_{j'}$. The
    remaining type B2 terms cancel with the type A2 and type A5 terms.
  \item For the type B3 term, if $g_{k+\ell}$ is of the form
    $(f_k,\Id)$, then it cancels with the type A3 term, and if
    $g_{k+\ell}$ is of the form $(\Id,f'_\ell)$, then it cancels with
    the type A6 term.
  \end{itemize}
  Thus, $\eta$ is a chain map.

  Clearly, $\eta$ intertwines the actions of $H\times H'$.
  
  For any objects $h\in H$ and $h'\in H'$, the diagram
  \[
    \xymatrix{
      (\hocolim G)\otimes (\hocolim G') \ar[r]^-\eta& \hocolim GG'\\
      G(h)\otimes G(h')\ar[r]_\cong \ar[u]^\simeq &GG'(h\times h')\ar[u]_\simeq
    }
  \]
  commutes.
  Since the bottom horizontal arrow is an isomorphism and the two vertical
  arrows are quasi-isomorphisms, this implies that $\eta$ is a quasi-isomorphism
  as well. This proves the result.
\end{proof}

\begin{corollary}\label{cor:internal-Kunneth}
  Suppose that $H$ acts on both $(M,L_0,L_1)$ and $(M',L'_0,L'_1)$,
  both satisfying~\cite[Hypothesis 3.2]{HEquivariant}. Endowing $(M\times M',L_0\times L_0',L_1\times L_1')$ with the diagonal action of $H$, there is a quasi-isomorphism 
  \[
    \ECF[H](L_0\times L'_0,L_1\times L'_1)\simeq
    \ECF[H](L_0,L_1)\otimes_{\Field} \ECF[H](L'_0,L'_1)
  \]
  as chain complexes over $\Field[H]$ with $H$ acting by the diagonal
  action on the right hand side.
\end{corollary}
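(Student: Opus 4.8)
The plan is to deduce the corollary from Theorem~\ref{thm:external-Kunneth} with $H'=H$ by restricting scalars along the diagonal homomorphism $\Delta\co H\to H\times H$. The diagonal $H$-action on $(M\times M',L_0\times L'_0,L_1\times L'_1)$ is the restriction along $\Delta$ of the product $H\times H$-action; since $\Delta$ is an injective homomorphism of (compact Lie) groups and~\cite[Hypothesis 3.2]{HEquivariant} constrains only the ambient symplectic data and its equivariance, both of which persist under restricting the acting group, the diagonal action again satisfies~\cite[Hypothesis 3.2]{HEquivariant}. Theorem~\ref{thm:external-Kunneth} then gives a quasi-isomorphism
\[
  \ECF[H\times H](L_0\times L'_0,L_1\times L'_1)\simeq \ECF[H](L_0,L_1)\otimes_\Field\ECF[H](L'_0,L'_1)
\]
over $\Field[H\times H]=\Field[H]\otimes_\Field\Field[H]$. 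Restriction of scalars along $\Delta$ is exact, hence preserves this quasi-isomorphism; on the right it carries the external module structure to the diagonal $\Field[H]$-module structure, which is precisely the right-hand side of the asserted quasi-isomorphism. It therefore suffices to identify $\Delta^*\ECF[H\times H](L_0\times L'_0,L_1\times L'_1)$ with $\ECF[H](L_0\times L'_0,L_1\times L'_1)$ computed for the diagonal action.

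For this I would invoke naturality of the construction of~\cite{HEquivariant} in the indexing category. As in the proof of Theorem~\ref{thm:external-Kunneth}, $\ECat(H\times H)=\ECat H\times\ECat H$, and $\Delta$ induces the diagonal functor $\ECat\Delta\co\ECat H\to\ECat H\times\ECat H$. Fix sufficiently generic homotopy coherent diagrams $F\co\ECat H\to\ol{\JSpace}_M$ and $F'\co\ECat H\to\ol{\JSpace}_{M'}$ and form $FF'\co\ECat(H\times H)\to\ol{\JSpace}_{M\times M'}$ as in that proof. Then $FF'\circ\ECat\Delta$ sends a string $f_k,\dots,f_1$ of $\ECat H$ to the product family $F(f_k,\dots,f_1)(t)\times F'(f_k,\dots,f_1)(t)$; this is an $H$-equivariant homotopy coherent diagram for the diagonal action, and (after the perturbation of $F,F'$ made in the proof of Theorem~\ref{thm:external-Kunneth}, plus one further small perturbation if needed) its product moduli spaces are transversely cut out, by the transversality observations at the start of that proof. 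Hence the diagonal-action complex $\ECF[H](L_0\times L'_0,L_1\times L'_1)$ may be computed as $\hocolim(GG'\circ\ECat\Delta)$, where $GG'\co\ECat(H\times H)\to\Complexes$ is the diagram associated to $FF'$, so that $\ECF[H\times H](L_0\times L'_0,L_1\times L'_1)=\hocolim GG'$.

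Finally I would compare the two homotopy colimits via the map $\hocolim(GG'\circ\ECat\Delta)\to\hocolim GG'$ induced by $\ECat\Delta$. This map is $H$-equivariant, the target carrying the diagonal $H$-action inherited via $\Delta$, because $\ECat\Delta$ intertwines the free $H$-action on $\ECat H$ with the diagonal $H$-action on $\ECat H\times\ECat H$. For any object $x_0\in\ECat H$ it fits into a commuting triangle with the canonical inclusions of $GG'(x_0,x_0)$ into the two homotopy colimits; both of these inclusions are quasi-isomorphisms---a basic property of these homotopy colimits established and used throughout~\cite{HEquivariant}, reflecting that $\ECat H$ and $\ECat H\times\ECat H$ are contractible indexing categories---so the comparison map is a quasi-isomorphism of complexes over $\Field[H]$. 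This identifies $\Delta^*\ECF[H\times H](L_0\times L'_0,L_1\times L'_1)$ with the diagonal-action complex and completes the argument. I expect the main obstacle to be the middle step: confirming that $FF'\circ\ECat\Delta$ is an admissible, sufficiently generic homotopy coherent diagram for the diagonal action and that the construction of~\cite{HEquivariant} is genuinely functorial under restriction along $\ECat\Delta$---essentially the bookkeeping already carried out for the external Künneth theorem.
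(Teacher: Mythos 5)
Your proposal is correct and takes essentially the same approach as the paper: both use the diagonal functor $\ECat H\into\ECat H\times\ECat H$, observe that composing with $FF'$ (or $GG'$) yields the relevant homotopy coherent diagram for the diagonal action, and show the induced comparison map on homotopy colimits is a quasi-isomorphism by noting both receive quasi-isomorphic inclusions from $G(h)\otimes G(h)$. Your added framing in terms of restriction of scalars along $\Delta$, and your explicit attention to genericity of $FF'\circ\ECat\Delta$, are both implicit in the paper's more condensed argument.
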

\begin{proof}
  With notation as in the proof of Theorem~\ref{thm:external-Kunneth},
  the diagonal map $H\to H\times H$ induces an inclusion map
  $\Delta\co \ECat H\into \ECat H\times \ECat H$. Composing with the
  functor $FF'$ gives a homotopy coherent $\ECat H$-diagram of
  almost complex structures $(FF')\circ\Delta$. The
  corresponding homotopy coherent diagram of chain complexes is
  $(GG')\circ\Delta$. There is an induced map
  \begin{equation}\label{eq:aux-Kunneth-map}
    \hocolim_{\ECat H}[(GG')\circ \Delta]\to \hocolim_{\ECat H\times\ECat H} (GG').
  \end{equation}
  This map clearly respects the $\Field[H]$-module structure, and
  using Theorem~\ref{thm:external-Kunneth}, the two terms are
  quasi-isomorphic to $\ECF[H](L_0\times L'_0,L_1\times L'_1)$ and
  $\ECF[H](L_0,L_1)\otimes_{\Field} \ECF[H](L'_0,L'_1)$ over
  $\Field[H]$. Since for any object $h$ of $\ECat H$ (i.e., element
  $h\in H$) the inclusion of $G(h)\otimes G(h)$ into both
  $\hocolim_{\ECat H\times\ECat H}(GG')$ and
  $\hocolim_{\ECat H}[(GG')\circ \Delta]$ are quasi-isomorphisms, the
  map~\eqref{eq:aux-Kunneth-map} is also a quasi-isomorphism. This
  proves the result.
\end{proof}

\begin{corollary}\label{cor:internal-relaxed-Kunneth}
  Suppose that $H$ acts (symplectically) on symplectic manifolds
  $M,M',N$ and suppose there are $H$-invariant open subsets $V\subset
  M$, $V'\subset M'$ and $U\subset N$ containing $H$-invariant closed
  Lagrangians $L_0,L_1\subset V$, $L'_0,L'_1\subset V'$, and
  $K_0,K_1\subset U$ such that the actions of $H$ on $(M,L_0,L_1)$,
  $(M',L'_0,L'_1)$, $(N,K_0,K_1)$ all satisfy~\cite[Hypothesis
  3.2]{HEquivariant}, and $(U,K_0,K_1)$ is identified
  $H$-symplectically with the product $(V\times V',L_0\times
  L'_0,L_1\times L'_1)$. As in the proofs of
  Theorem~\ref{thm:external-Kunneth} and
  Corollary~\ref{cor:internal-Kunneth}, suppose that there exist
  systems of eventually cylindrical almost complex structures $F$ and
  $F'$ for $M$ and $M'$ and extensions $\wt{(FF')\circ\Delta}$ of
  $(FF')\circ\Delta$ from $U$ to all of $N$, so that $F$, $F'$, and $\wt{(FF')\circ\Delta}$ are
  regular for the freed complexes $\ECF[H](L_0,L_1)$,
  $\ECF[H](L'_0,L'_1)$, and $\ECF[H](K_0,K_1)$ and for which the
  defining holomorphic strips all lie inside $V$, $V'$ and $U$. Then
  there is a quasi-isomorphism
  \[
    \ECF[H](K_0,K_1)\simeq
    \ECF[H](L_0,L_1)\otimes_{\Field} \ECF[H](L'_0,L'_1)
  \]
  as chain complexes over $\Field[H]$ with $H$ acting by the diagonal
  action on the right hand term.
\end{corollary}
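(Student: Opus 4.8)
The plan is to reduce Corollary~\ref{cor:internal-relaxed-Kunneth} to Corollary~\ref{cor:internal-Kunneth} by a locality argument: all the relevant Floer-theoretic data of $(N,K_0,K_1)$ are already contained in the open set $U$, which is $H$-symplectically identified with $(V\times V', L_0\times L'_0, L_1\times L'_1)$, so nothing outside $U$ can contribute. Concretely, I would first observe that the chain complex $\ECF[H](K_0,K_1)$ computed using the almost complex structure system $\wt{(FF')\circ\Delta}$ is, by hypothesis, identical on the nose to the complex one would get from $(FF')\circ\Delta$ on $U$: the generators are the intersection points $K_0\cap K_1 = L_0\cap L_1 \times L'_0\cap L'_1$ (all lying in $U$), and the defining holomorphic strips (for all the higher homotopy-coherent data indexed by $\ECat H$, i.e.\ the whole diagram $GG'\circ\Delta$) are assumed to lie inside $U$. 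Hence the homotopy-coherent diagram of chain complexes computing $\ECF[H](K_0,K_1)$ agrees with $(GG')\circ\Delta$ viewed inside $U \cong V\times V'$.

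Second, I would address the discrepancy that in Corollary~\ref{cor:internal-Kunneth} the complex structures $F$ and $F'$ live on all of $M$ and $M'$ (and $FF'$ on $M\times M'$), whereas here $V$ and $V'$ are merely open subsets. The point is that $\ECF[H](L_0,L_1)$ only depends on $F$ near the Lagrangians $L_0,L_1\subset V$ and on the holomorphic strips, which by hypothesis lie in $V$; so the complex $\ECF[H](L_0,L_1)$ computed in $M$ equals the one computed in $V$, and similarly for $M'$ and for $V\times V'$. This is the same principle invoked repeatedly in Section~\ref{sec:background}: Floer homology of convex-at-infinity (or $I$-convex) manifolds is insensitive to modifications far from the Lagrangians, and an open subset of a convex manifold containing the Lagrangians, with strips confined to it, computes the same thing. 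So I can run the construction of Theorem~\ref{thm:external-Kunneth} and Corollary~\ref{cor:internal-Kunneth} with $M,M'$ replaced by $V,V'$ and $FF'$ replaced by (an extension of) $(FF')\circ\Delta$, obtaining
\[
  \ECF[H](K_0,K_1) \;\simeq\; \hocolim_{\ECat H}\bigl[(GG')\circ\Delta\bigr] \;\simeq\; \ECF[H](L_0,L_1)\otimes_{\Field}\ECF[H](L'_0,L'_1),
\]
where the first quasi-isomorphism is the locality identification just described and the second is exactly the argument of Corollary~\ref{cor:internal-Kunneth}, which only used that $(GG')\circ\Delta$ is a regular homotopy-coherent $\ECat H$-diagram whose hocolim is quasi-isomorphic to $G(h)\otimes G(h)$ for any object $h$. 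All of this respects the $\Field[H]$-module structure because every identification in sight is $H$-equivariant, and the diagonal $H$-action on the right-hand side is inherited exactly as in Corollary~\ref{cor:internal-Kunneth}.

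The main obstacle is bookkeeping the hypotheses so that the locality argument is airtight: one must check that $(N,K_0,K_1)$ with the modified-near-$U$ almost complex structure $\wt{(FF')\circ\Delta}$ genuinely satisfies the regularity and convexity needed to define $\ECF[H](K_0,K_1)$ (this is granted by hypothesis), and that no holomorphic strip or polygon contributing to any piece of the homotopy-coherent diagram can escape $U$ — again this is built into the hypothesis "the defining holomorphic strips all lie inside $V$, $V'$ and $U$," so really the work is just to note that the generating data and all structure maps of the two homotopy-coherent diagrams coincide. Once that identification is in place, invariance of $\ECF[H]$ under the choice of regular data (proved in~\cite{HEquivariant}) finishes the argument, since any two regular choices give quasi-isomorphic freed complexes over $\Field[H]$.
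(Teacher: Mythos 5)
Your proposal is correct and spells out exactly the locality argument the paper has in mind; the paper's own proof is simply the sentence ``This follows immediately from Corollary~\ref{cor:internal-Kunneth},'' and your unpacking—identifying the generators and all homotopy-coherent data of $\ECF[H](K_0,K_1)$ with those of $(GG')\circ\Delta$ inside $U\cong V\times V'$, and similarly shrinking $M,M'$ to $V,V'$—is precisely the bookkeeping that justifies that ``immediately.''
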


\begin{proof}
This follows immediately from Corollary~\ref{cor:internal-Kunneth}.
\end{proof}

\subsection{The K\"unneth theorem for equivariant symplectic Khovanov homology}
In this section we prove an equivariant version of Waldron's K\"unneth
theorem for symplectic Khovanov homology in~\cite[Theorem
1.2]{Waldron:KhSympMaps}.

We will use the following elementary lemma:

\begin{lemma}\label{lem:elementary}
  Let $p(z)$ be a complex polynomial which has simple roots and no
  zeros in the open unit disk $D$. Then there is a smooth 1-parameter
  family of complex polynomials $p_t(z)$, each with no zeros in $D$,
  and only simple roots anywhere,
  interpolating between $p(z)$ and the constant function $1$.
\end{lemma}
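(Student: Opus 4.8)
The plan is to slide all the roots of $p$ radially outward to infinity while rescaling so that the polynomial limits to the constant $1$. First I would factor $p(z)=c\prod_{i=1}^{d}(z-a_i)$ with $c\in\CC\setminus\{0\}$; the hypothesis that $p$ has simple roots means the $a_i$ are pairwise distinct, and the hypothesis that $p$ has no zeros in $D$ means $|a_i|\geq 1$ for every $i$ (in particular each $a_i\neq 0$). Dividing out the leading behaviour gives the identity $\prod_{i=1}^{d}(1-z/a_i)=\mu^{-1}p(z)$, where $\mu:=(-1)^{d}c\prod_{i=1}^{d}a_i\in\CC\setminus\{0\}$.

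Next I would choose any smooth path $\rho\co[0,1]\to\CC\setminus\{0\}$ with $\rho(0)=\mu$ and $\rho(1)=1$; such a path exists because $\CC\setminus\{0\}$ is path connected (one may even take $\rho(t)=|\mu|^{1-t}e^{i(1-t)\theta}$ for a fixed choice $\theta\in\arg\mu$), and then define
\[
  p_t(z)\ :=\ \rho(t)\prod_{i=1}^{d}\Bigl(1-\frac{(1-t)\,z}{a_i}\Bigr),\qquad t\in[0,1].
\]
Then $p_0=\mu\cdot\mu^{-1}p=p$ and $p_1=\rho(1)\cdot 1=1$, and each coefficient of $p_t$ (as a polynomial in $z$) is $\rho(t)$ times a polynomial in $t$, hence depends smoothly on $t$. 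For $t\in[0,1)$ the zeros of $p_t$ are precisely the points $a_i/(1-t)$; these are pairwise distinct since the $a_i$ are, and $|a_i/(1-t)|=|a_i|/(1-t)\geq|a_i|\geq 1$, so they are simple roots lying outside the open disk $D$. For $t=1$ we have $p_1\equiv 1$, which has no zeros at all. Hence every $p_t$ has only simple roots and no zeros in $D$, which proves the lemma.

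There is no real obstacle here; the one point to get right is that one cannot simply push the roots outward and linearly interpolate the leading coefficient back to that of $p$, since the required rescaling must be carried out along a path in $\CC\setminus\{0\}$ — equivalently, the degree of $p_t$ is allowed to drop only in the limit $t=1$. The scalar factor $\rho(t)$ is exactly what absorbs this, and path-connectedness of $\CC\setminus\{0\}$ is all that is needed to produce it.
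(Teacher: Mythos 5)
Your proof is correct, and it takes a genuinely different route from the paper's. The paper argues by induction on the degree: after scaling to reduce to the monic case, it moves all the roots to fixed integer locations $2,3,\dots,n+1$ outside the unit disk, and then lowers the degree one step at a time by a family of the form $p_t(z)=[(1-t)z-(n+1)]q(z)$, which sends a single root to infinity while the rest stay put. You instead normalize $p$ once and for all as $\mu\prod_i(1-z/a_i)$, push \emph{all} the roots to infinity simultaneously via the dilation $z\mapsto(1-t)z$, and absorb the leading-coefficient bookkeeping into a single path $\rho$ in $\CC^\times$. Both arguments work; yours is more uniform and produces a single explicit smooth formula, which sidesteps both the induction and the point the paper has to flag at the end of its proof, namely that the paths produced by the different inductive steps must be reparameterized before concatenation to remain smooth. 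One small remark: as you observe, the degree of $p_t$ drops abruptly at $t=1$, so the family is smooth in the space of polynomials of bounded degree (equivalently, coefficientwise), not in any space of monic degree-$d$ polynomials; the paper's families have the same feature, and this is all the lemma and its application require.
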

\begin{proof}
  The proof is by induction on the degree $n$ of
  $p(z)=a_0+\cdots+a_nz^n$. By multiplying by a path in $\CC^\times$
  from $1$ to $1/a_n$, we may assume $p(z)$ is monic. A monic
  polynomial is uniquely determined by its roots.
  So, there is a path from $p(z)$ to the polynomial
  $(z-2)(z-3)\cdots(z-n-1)$, simply by moving all the roots outside
  the unit disk. Next, let $q(z)=(z-2)(z-3)\cdots(z-n)$ and consider
  the path of polynomials
  \[
    p_t(z)=[(1-t)z-n-1)]q(z).
  \]
  The roots of $p_t(z)$ are $2,3,\dots,n$ and $(n+1)/(1-t)$, all of
  which lie outside the unit circle and are distinct. The polynomial
  $p_1(z)$ has degree $(n-1)$, and all roots outside the unit
  circle. By induction, this completes the proof. (Note that when
  concatenating the paths in the different steps of the proof, one needs
  to reparameterize the paths so the concatenation is smooth.)
\end{proof}

\begin{proposition}\label{prop:disjoint-union}
  Given bridge diagrams $L$ and $L'$, there is a quasi-isomorphism of chain complexes over $\Field[D_{2^m}]$,
  \[
    \eKCSymp(L\amalg L')\simeq \eKCSymp(L)\otimes_{\Field}\eKCSymp(L'),
  \]
  where the right-hand side has the diagonal action of $D_{2^m}$.
\end{proposition}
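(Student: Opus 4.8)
The plan is to deduce the proposition from Corollary~\ref{cor:internal-relaxed-Kunneth}, applied with $H=D_{2^m}$, by realizing the symplectic Khovanov data of $L\amalg L'$ as a ``relaxed product'' of the data of $L$ and of $L'$. I would first choose the bridge diagram for $L\amalg L'$ so that the arcs and branch points of $L$ lie in a bounded open set $V\subset\CC$ and those of $L'$ lie in a disjoint bounded open set $V'$, with $\overline V\cap\overline{V'}=\emptyset$ and with the noncompact region $R$ of the combined diagram separating $V$ from $V'$, so that $\CC\setminus R=V\sqcup V'$; here Lemma~\ref{lem:elementary} is used to move the branch points into such a separated configuration through configurations of distinct points. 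Write $n$ and $n'$ for the numbers of bridges of $L$ and $L'$, let $S$ (with $\pi\co S\to\CC$) be the surface used to define $\eKCSymp(L\amalg L')$, and put $N=\ssspace{n+n'}$. Let $U\subset N$ be the open, $D_{2^m}$-invariant subset of configurations with exactly $n$ points over $\pi^{-1}(V)$ and $n'$ over $\pi^{-1}(V')$. Since the branch points of $L'$ lie outside $V$, over $\pi^{-1}(V)$ the surface $S$ agrees with the corresponding piece of the Seidel--Smith surface for $L$ alone, and symmetrically over $\pi^{-1}(V')$; hence $U$ is identified symplectically (for the averaged K\"ahler forms of Section~\ref{sec:convex}, which agree with product forms away from the diagonals) with a product $M\times M'$, where $M$ (resp.\ $M'$) is the region of the Seidel--Smith space of $L$ (resp.\ $L'$) lying over the relevant open subsurface. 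By Section~\ref{sec:convex} the manifolds $M$, $M'$, $N$ are $I$-convex, hence satisfy~\cite[Hypothesis 3.2]{HEquivariant}; because the $O(2)$-action on Hilbert schemes is fibrewise over $\CC$ it preserves the splitting $U\cong M\times M'$ and acts by the diagonal action; and the combined Lagrangians $\CipLag_{A\amalg A'}$, $\CipLag_{B\amalg B'}$ lie in $U$ and correspond to $\CipLag_A\times\CipLag_{A'}$ and $\CipLag_B\times\CipLag_{B'}$. This is precisely the setting of Corollary~\ref{cor:internal-relaxed-Kunneth}.

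It then remains to supply the almost complex structures the corollary demands: regular homotopy coherent systems $F$, $F'$ on $M$, $M'$ and a regular extension to $N$ of the product system $(FF')\circ\Delta$ on $U$, such that all the defining holomorphic strips stay inside $M$, $M'$, and $U$. For the containment I would invoke the projected-domain discussion of Section~\ref{sec:background}: taking the systems on $N$ to satisfy conditions~\ref{item:J-first}--\ref{item:J-last} for the combined diagram, every holomorphic strip on $(\CipLag_{A\amalg A'},\CipLag_{B\amalg B'})$ has projected domain supported in $\CC\setminus R=V\sqcup V'$ and image disjoint from $\pi^{-1}(R)\times\Hilb^{n+n'-1}(S)$, so the number of its points lying over $V$ is locally constant on the connected strip; it therefore lands in $U$ and in fact splits as a strip over $\pi^{-1}(V)$ and a strip over $\pi^{-1}(V')$. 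The same argument applied to the individual diagrams keeps the strips for $M$ and $M'$ inside $M$ and $M'$. For regularity, on $U$ the product system is already regular for the freed complex by the index computation in the proof of Theorem~\ref{thm:external-Kunneth} (Maslov index $1-k-\ell$ moduli spaces on a product are empty unless $k=0$ or $\ell=0$, in which case they are a lower-dimensional moduli space times constant disks), and since no strip leaves $U$ any generic extension to $N$ remains regular for the combined freed complex; $F$ and $F'$ may likewise be chosen generic, hence regular. Corollary~\ref{cor:internal-relaxed-Kunneth} then yields a quasi-isomorphism $\ECF[D_{2^m}](\CipLag_{A\amalg A'},\CipLag_{B\amalg B'})\simeq\ECF[D_{2^m}](\CipLag_A,\CipLag_B)\otimes_\Field\ECF[D_{2^m}](\CipLag_{A'},\CipLag_{B'})$ over $\Field[D_{2^m}]$ with the diagonal action on the right, which---after the identifications above---is the asserted statement.

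The step I expect to be the main obstacle is the identification of the open subset $U\subset\ssspace{n+n'}$ with the external product of the Seidel--Smith spaces of $L$ and $L'$, equivariantly for the $O(2)$-action and compatibly with the symplectic forms and convexity structures of Section~\ref{sec:convex}: the total space $\ssspace{n+n'}$ is not globally a product, the ambient surface genuinely varies as one separates or migrates branch points, and one must check that the averaged K\"ahler form restricted to $U$ is, or can be isotoped rel the Lagrangians to, a product of the corresponding forms on the factors. Once that identification is in place, the remaining verification of the hypotheses of Corollary~\ref{cor:internal-relaxed-Kunneth}---the holomorphic-curve containment and the regularity bookkeeping---is routine, given Section~\ref{sec:background} and the proof of Theorem~\ref{thm:external-Kunneth}.
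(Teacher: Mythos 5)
Your high-level strategy (reduce to Corollary~\ref{cor:internal-relaxed-Kunneth}, use projected domains to confine strips, identify a suitable $U$ with a product) is the same as the paper's, and the containment/regularity bookkeeping you sketch is essentially what appears there. However, there is a genuine gap in the step you yourself flag as ``the main obstacle,'' and it is not a routine verification: your claim that ``over $\pi^{-1}(V)$ the surface $S$ agrees with the corresponding piece of the Seidel--Smith surface for $L$ alone'' is false. Over a point $z\in V$, the fiber of $S_{L\amalg L'}$ is the conic $u^2+v^2+(z-b_1)\cdots(z-b_{2n+2n'})=0$, whereas the fiber of $S_L$ is $u^2+v^2+(z-b_1)\cdots(z-b_{2n})=0$; the defining polynomials differ by the nonvanishing factor $(z-b_{2n+1})\cdots(z-b_{2n+2n'})$, so the two open surfaces, and the associated Hilbert schemes with their averaged K\"ahler forms and Lagrangians, are \emph{not} literally equal. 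Consequently $U$ is a product of pieces of $\Hilb^n(S_{L\amalg L'})$ and $\Hilb^{n'}(S_{L\amalg L'})$, not of pieces of $\ssspace{n}$ and $\ssspace{n'}$. You cannot simply ``isotope the form rel the Lagrangians'': you must compare freed Floer complexes computed in two distinct symplectic manifolds, and that requires a continuation-map argument over a family of total spaces, together with a uniform convexity estimate to keep the continuation strips in a fixed compact set. That is exactly the remaining half of the paper's proof, and it is the substantive part.

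Relatedly, your invocation of Lemma~\ref{lem:elementary} is misdirected: you use it to ``move the branch points into a separated configuration,'' which is a trivial isotopy and needs no lemma. In the paper the lemma is used for the step above: it produces a smooth path of polynomials $p_t$ from $(z-b_1)\cdots(z-b_{2n+2n'})$ to $(z-b_1)\cdots(z-b_{2n})$ whose roots inside $U_L$ are exactly $b_1,\dots,b_{2n}$ throughout, so that the surfaces $S_t=\{u^2+v^2+p_t(z)=0\}$ interpolate between the two total spaces without acquiring singularities. One then runs a continuation map over this family (a bundle over $\RR\times[0,1]$ with fibers $\Hilb^n(V_t)\setminus\nabla$ and Lagrangian sub-bundle), and proves a uniform $I$-convexity statement---intersection positivity against $\nabla$ and a maximum-modulus bound after embedding the family in $\CC^N$---to guarantee the relevant moduli spaces are compact, so that the invariance argument for the freed complex applies. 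Without this interpolation and convexity argument, the reduction to Corollary~\ref{cor:internal-relaxed-Kunneth} does not actually terminate in the asserted quasi-isomorphism.
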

\begin{proof}
  For symplectic Khovanov homology itself, this is~\cite[Theorem 1.2]{Waldron:KhSympMaps}.

  For the freed Floer complex, we will deduce this result from
  Corollary~\ref{cor:internal-relaxed-Kunneth} after choosing suitable
  open sets $U$, $V$, and $V'$ and deforming the symplectic forms on
  the open set $U$ to be a product (without losing control of the
  holomorphic curves).

  Let $b_1,\dots,b_{2n}$ be the endpoints of the bridges in $L$ (so
  $n$ be the number of bridges in $L$) and $b_{2n+1},\dots,b_{2n+2n'}$
  the endpoints of the bridges in $L'$.  After an isotopy, we may
  assume there are disjoint open disks $U_L,U_{L'}\subset\CC$, containing
  $L$ and $L'$. Let
  \begin{align*}
    S_L&=\{(u,v,z)\in\CC^3\mid u^2+v^2+(z-b_1)\cdots(z-b_{2n})=0\}\\
    S_{L'}&=\{(u,v,z)\in\CC^3\mid u^2+v^2+(z-b_{2n+1})\cdots(z-b_{2n+2n'})=0\}\\
    S_{L\amalg L'}&=\{(u,v,z)\in\CC^3\mid u^2+v^2+(z-b_1)\cdots(z-b_{2n+2n'})=0\}.
  \end{align*}
  Let $\wt{U}_L$ (respectively $\wt{U}_{L'}$) be the preimage of $U_L$ 
  (respectively $U_{L'}$) in $S_{L\amalg L'}$. Let $V_L$ (respectively
  $V_{L'}$) be the preimage of $U_L$ in $S_L$ (respectively the preimage
  of $U_{L'}$  in $S_{L'}$).

  By Point~\ref{item:domain-2} in Section~\ref{sec:background}, any
  holomorphic Whitney disk in $\ssspace{n+n'}$ lies in the subspace
  $U\coloneqq [\Hilb^n(\wt{U}_L)\times \Hilb^{n'}(\wt{U}_{L'})]\cap\ssspace{n+n'}$. If
  we let $\nabla$ denote the subspace of $\Hilb^n(\wt{U}_L)$
  (respectively $\Hilb^{n'}(\wt{U}_{L'})$) where the projection to $\CC$
  has length less than $n$ (respectively $n'$) then
  \[
    [\Hilb^n(\wt{U}_L)\times \Hilb^{n'}(\wt{U}_{L'})]\cap\ssspace{n+n'}=[\Hilb^n(\wt{U}_L)\setminus\nabla]\times[\Hilb^{n'}(\wt{U}_{L'})\setminus\nabla].
  \]
  With respect to the restrictions of the averaged symplectic forms on
  $\Hilb^{n+n'}(S_{L\amalg L'})$ from~\cite[Lemma 4.24]{HLS:Lie} (see
  also Section~\ref{sec:convex}), this identification is a
  symplectomorphism, and it is also biholomorphic with respect to the
  standard complex structures.

  Let
  \begin{align*}
    V&=\Hilb^n(V_L)\setminus\nabla \subset\ssspace{n}\\
    V'&=\Hilb^{n'}(V_{L'})\setminus\nabla \subset\ssspace{n'}.
  \end{align*}
  By the general K\"unneth theorem for the freed Floer complex
  (Corollary~\ref{cor:internal-relaxed-Kunneth}) it suffices to show
  that the freed Floer complex of
  $(\Sigma_{A_1}\times\cdots\times\Sigma_{A_n},\Sigma_{B_1}\times\cdots\times\Sigma_{B_n})$
  inside $\Hilb^n(\wt{U}_L)\setminus\nabla$ is quasi-isomorphic to
  their freed Floer complex inside $V$
  (and similarly for $L'$).
  
  By Lemma~\ref{lem:elementary}, the polynomials
  $p_0(z)=(z-b_1)\cdots(z-b_{2n+2n'})$ and
  $p_1(z)=(z-b_1)\cdots(z-b_{2n})$ can be connected by a smooth family
  of polynomials $p_t(z)$, $t\in[0,1]$ whose roots in $U$ are exactly
  $b_1,\dots,b_{2n}$, and so that all the roots of $p_t(z)$ are
  simple. Let $S_t=\{(u,v,z)\in\CC^3\mid u^2+v^2+p_t(z)=0\}$ and let
  $V_t$ be the preimage of $U$ in $S_t$ ($t\in[0,1]$). The subspaces $V_t$ form a
  smooth family of open complex surfaces. (In particular, each $S_t$
  is smooth, since $p_t$ has only simple roots.)

  Each $V_t$ contains Lagrangian spheres $\Sigma_{A_i}$ and
  $\Sigma_{B_i}$ for $i=1,\dots,n$.  Taking their Hilbert schemes
  gives a smooth family of complex manifolds
  $\Hilb^n(V_t)\setminus\nabla$. Abouzaid-Smith's construction of
  their K\"ahler form $\omega'$ in~\cite[Lemma
  5.5]{AbouzaidSmith:arc-alg} gives a smooth $1$-parameter family of
  K\"ahler forms on $\Hilb^n(V_t)$, which restrict to
  $\Hilb^n(V_t)\setminus\nabla$ as exact forms and which agree with
  the product form outside a neighborhood of the diagonal, so
  $\Sigma_{A_1}\times\cdots\times\Sigma_{A_n}$ and
  $\Sigma_{B_1}\times\cdots\times\Sigma_{B_n}$ are Lagrangian.  The
  averaging construction from~\cite[Lemma 4.24]{HLS:Lie} then gives a
  smooth family of $O(2)$-invariant K\"ahler forms for which
  $\Sigma_{A_1}\times\cdots\times\Sigma_{A_n}$ and
  $\Sigma_{B_1}\times\cdots\times\Sigma_{B_n}$ are still Lagrangian.

  Let $V_t=V_0$ if $t<0$ and $V_t=V_1$ if $t>1$.  The continuation map
  from the freed Floer complex of
  $(\Hilb^n(V_0)\setminus\nabla,
  \Sigma_{A_1}\times\cdots\times\Sigma_{A_n},
  \Sigma_{B_1}\times\cdots\times\Sigma_{B_n})$ to the freed Floer
  complex of
  $(\Hilb^n(V_1)\setminus\nabla,
  \Sigma_{A_1}\times\cdots\times\Sigma_{A_n},
  \Sigma_{B_1}\times\cdots\times\Sigma_{B_n})$ is defined by counting
  $\wt{J}$-holomorphic sections of a bundle $E$ over $\RR\times[0,1]$
  whose fiber over $(t,s)$ is $V_t$, with boundary in the sub-bundle
  $F\subset E$ specified by
  $\Sigma_{A_1}\times\cdots\times\Sigma_{A_n}$ and
  $\Sigma_{B_1}\times\cdots\times\Sigma_{B_n}$ over $\RR\times \{0\}$
  and $\RR\times\{1\}$, respectively. Here, the $\wt{J}$ are suitable
  families of fiberwise almost complex structures $\wt{J}$ satisfying analogues
  of~\ref{item:J-first}--\ref{item:J-last}.
  
  We claim that the manifolds $\Hilb^n(V_t)\setminus\nabla$ are
  uniformly $I$-convex in the following sense, which implies the
  continuation maps are well-defined. Let $j_t$ be the complex
  structure on $V_t$ and let $I_t=\Hilb^n(j_t)$ be the complex
  structure on $\Hilb^n(V_t)$ inherited from $V_t$.  Let $J_t$ be a
  family of almost complex structures satisfying
  conditions~\ref{item:J-first}--\ref{item:J-last} with respect to
  $j_t$ and $I_t$. In particular, assume that $J_t$ agrees with $I_t$
  outside a compact set $K_t$, so that $\bigcup_t K_t$ is also
  compact. The almost complex structures $J_t$ give a fiberwise almost complex
  structure on $E$.  By uniform $I$-convexity we mean there is a
  compact set $K'$, depending only on the $K_t$, so that if $u$ is a
  $J_t$-holomorphic section of $(E,F)$ then the image of $u$ is
  contained in $K'$.
  
  Specifically, fix a family of $I_t$-holomorphic embeddings of the manifolds
  $\Hilb^n(S_t)\setminus\nabla$ in $\CC^N$ for some large $N$. Then
  $K'$ is the intersection of $\bigcup_t\Hilb^n(V_t)\setminus\nabla$ with:
  \begin{itemize}
  \item $\Hilb^n(V'_t)$ where $V'_t$ is the preimage of a slightly
    smaller open set $U'$ with $\overline{U'}\subset U$, and
  \item the polydisk $\{(z_1,\dots,z_n)\in\CC^N\mid |z_i|\leq R\}$
    where $R$ is large enough that this set contains the Lagrangians
    and the compact sets $K_t$.
  \end{itemize}
  The fact that the image of a holomorphic curve $u$ lies in $K'$
  follows from positivity of intersections (for the first term in the
  intersection, as in~\ref{item:domain-2} above) and the maximum
  modulus theorem (for the second term in the intersection).

  %

  As noted above, this convexity is enough to ensure that the proof of
  invariance of the freed Floer complex~\cite[Proposition
  3.28]{HEquivariant} applies. Hence, the freed Floer complexes in
  $\Hilb^n(V_0)\setminus\nabla$ and $\Hilb^n(V_1)\setminus\nabla$
  agree up to quasi-isomorphism, completing the proof.
\end{proof}

\subsection{The basepoint action}
The last ingredient in the proof of equivariant stabilization
invariance is a module structure on symplectic Khovanov homology,
analogous to one on Khovanov homology. Fix a bridge diagram $L$ and a
basepoint $p\in L$ on one the $A$-arcs, say $A_i$. There is an
action of $\Field[X]$ on the symplectic Khovanov complex of $L$
defined as follows. Choose a preimage $p_e\in \Sigma_{A_i}$ lying over
$p\in A_i$. For any $q\in \Sigma_{A_i}$, let
$\mathcal{O}_q\subset \CipLag_A$ denote the codimension-2 subspace
where one of the coordinates is $q$. Then the action of $X$ counts
rigid holomorphic strips $u\from \RR\times[0,1]\to \ssspace{n}$ with
$u(0,0)\in \mathcal{O}_{p_e}$. (The fact that such moduli spaces with
a point constraint are transversely cut out is a straightforward
adaptation of~\cite[Theorem
3.4.1]{MS04:HolomorphicCurvesSymplecticTopology} to the relative
case.)

\begin{theorem}\label{thm:unknot-action-2}
  The above action of $\Field[X]$ on the symplectic
  Khovanov complex of $L$ satisfies the following properties:
  \begin{enumerate}[label=(BP-\arabic*)]
  \item\label{item:bp-square} Multiplication by $X^2$ is homotopic to
    $0$, so symplectic Khovanov homology inherits an action of
    $\Field[X]/(X^2)$.
  \item\label{item:bp-commute} If $p,p'\in L$ are different points then
    multiplication by $X$ at $p$ and at $p'$ commute up to homotopy.
  \item\label{item:bp-unknot} The symplectic Khovanov homology of the
    $1$-bridge unknot is isomorphic to $\Field[X]/(X^2)$.
  \item\label{item:bp-invariance} Up to homotopy, the chain maps
    associated to changes of almost complex structures on
    $\ssspace{n}$, isotopies and handleslides of the $A$- and $B$-arcs
    which do not move the point $p$, and diffeomorphisms (which may
    move $p$), commute with multiplication by $X$.
  \item\label{item:bp-skein} The maps in the skein exact triangle from
    Theorem~\ref{thm:skein-tri} respect the action by
    $\Field[X]/(X^2)$ (where corresponding points $p$ are used for the
    three diagrams).
  \item\label{item:bp-move-bp} If $A_i,B_i,A_{i+1}$ are adjacent arcs
    in a bridge diagram so that the interior of $B_i$ does not
    intersect any $A$-arc and the interior of $A_i$ does not intersect
    any $B$-arc---that is, if the configuration looks like
    Figure~\ref{fig:stabilization}(b)---and if $p\in A_i$ and $p'\in
    A_{i+1}$, then the actions by $p$ and $p'$ are homotopic.
  \item\label{item:bp-Kunneth} If $L$ is a disjoint union of two
    bridge diagrams $L_1\amalg L_2$ and $p$ is a basepoint on $L_1$
    then the K\"unneth theorem
    $\KCSymp(L)\simeq \KCSymp(L_1)\otimes_\Field \KCSymp(L_2)$
    (from~\cite[Theorem 1.2]{Waldron:KhSympMaps} or the
    non-equivariant version of Proposition~\ref{prop:disjoint-union})
    intertwines the action of $X$ on $\KCSymp(L)$ and the action of
    $X$ on $\KCSymp(L_1)\otimes_\Field \KCSymp(L_2)$ induced from the
    action of $X$ on $\KCSymp(L_1)$. That is, for appropriate choices
    of almost complex structures, $(m\otimes n)*_pX=(m*_pX)\otimes n$.
  \end{enumerate}
\end{theorem}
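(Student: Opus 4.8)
The plan is to establish the seven properties~\ref{item:bp-square}--\ref{item:bp-Kunneth} in turn, in each case inserting the point constraint ``pass through $\mathcal{O}_{p_e}$'' into an otherwise standard holomorphic-curve argument. The input needed throughout is transversality: for a generic almost complex structure $J$ in the class~\ref{item:J-first}--\ref{item:J-last}, and for generic one-parameter families within that class, every moduli space of $J$-holomorphic strips, continuation strips, or triangles carrying a boundary marked point mapped to $\mathcal{O}_{p_e}$ is cut out transversally of the expected dimension, and---by the projected-domain and convexity discussion of Section~\ref{sec:background}---no such curve escapes to infinity or bubbles (the relevant symplectic forms being exact on $\ssspace{n}$). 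Since $p$ may be chosen to be a generic point of $A_i$, disjoint from all the $z_i$, the submanifold $\mathcal{O}_{p_e}$ lies where $J$ is unconstrained, so this is the relative analogue of~\cite[Theorem 3.4.1]{MS04:HolomorphicCurvesSymplecticTopology}. The other geometric fact I would use repeatedly is that, because the bridges of a bridge diagram are pairwise disjoint, a point $q\in\Sigma_{A_i}$ can only be the $i$th coordinate of a point of $\CipLag_A\cong\Sigma_{A_1}\times\cdots\times\Sigma_{A_n}$; hence $\mathcal{O}_q=\{q\}\times\prod_{j\ne i}\Sigma_{A_j}$, and in particular $\mathcal{O}_q\cap\mathcal{O}_{q'}=\emptyset$ whenever $q\ne q'$ both lie on $\Sigma_{A_i}$.

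For~\ref{item:bp-square} I would first note that, by the usual continuation argument applied to a one-parameter family of divisors $\mathcal{O}_{q(t)}$ as $q(t)$ moves along a path in $\Sigma_{A_i}$, the action defined using a preimage $q\in\Sigma_{A_i}$ is independent of $q$ up to homotopy; so $X_p\circ X_p\simeq X_{p_e}\circ X_q$ for any $q\in\Sigma_{A_i}\setminus\{p_e\}$. Then $X_{p_e}\circ X_q\simeq 0$: glue the broken strips into a single strip with two ordered boundary marked points $z_1<z_2$ constrained to $\mathcal{O}_{p_e}$ and $\mathcal{O}_q$ respectively, with signed distance $z_2-z_1$ free; the $z_2-z_1\to\infty$ end of this one-dimensional moduli space recovers $X_{p_e}\circ X_q$, the ``collision'' end $z_2-z_1\to 0$ is \emph{empty} (no bubbling, and $\mathcal{O}_{p_e}$, $\mathcal{O}_q$ disjoint), and the Floer-breaking ends give $\partial h+h\partial$. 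Property~\ref{item:bp-commute} is the same gluing argument applied to $\mathcal{O}_{p_e}$ and $\mathcal{O}_{p'_e}$: here the collision configuration $u(z)\in\mathcal{O}_{p_e}\cap\mathcal{O}_{p'_e}$ genuinely contributes a count $C$, but \emph{both} orderings $z_1<z_2$ and $z_1>z_2$ give $X_p\circ X_{p'}\simeq C$ and $X_{p'}\circ X_p\simeq C$, so the two composites are homotopic. For~\ref{item:bp-unknot} I would compute directly: for the one-bridge unknot $\ssspace{1}$ is (an open subset of) the affine surface $\{u^2+v^2+(z-b_1)(z-b_2)=0\}$ with $\CipLag_A,\CipLag_B$ Lagrangian $2$-spheres meeting transversally in two points, as in Seidel--Smith~\cite{SeidelSmith6:Kh-symp}; the differential vanishes, and a direct count (or comparison with the skein triangle) shows $X$ carries the top generator to the bottom one and kills the bottom one, identifying the module with $\Field[X]/(X^2)$.

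The naturality statements~\ref{item:bp-invariance} and~\ref{item:bp-skein} follow from the principle that every structure map in sight is itself a count of holomorphic curves, so inserting one more boundary marked point on $\mathcal{O}_{p_e}$ and examining the codimension-one ends of the resulting one-parameter moduli space expresses the desired commutation up to homotopy. Concretely, for a change-of-$J$, isotopy, or handleslide map $\Phi$, the relevant moduli space consists of the corresponding curves (continuation strips, strips in a one-parameter family of data, triangles) with an extra boundary marked point on $\mathcal{O}_{p_e}$; its ends where the marked point runs off an end of the domain produce $X\circ\Phi$ and $\Phi\circ X$, and its Floer-degeneration ends produce $\partial(\cdot)+(\cdot)\partial$---using that the isotopies and handleslides fix a neighborhood of $p$, hence of $p_e$, so $\mathcal{O}_{p_e}$ is defined throughout. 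Diffeomorphisms are handled by transporting $\mathcal{O}_{p_e}$ along the diffeomorphism. For~\ref{item:bp-skein}, choosing $p$ in the part of the diagram where $B$, $B'$, $B''$ agree makes the triangle-defining elements $\alpha,\beta,\gamma$ of Theorem~\ref{thm:skein-tri} disjoint from $\mathcal{O}_{p_e}$, so the analogous count of holomorphic triangles shows the three maps commute with $X$ up to homotopy; combined with~\ref{item:bp-square}, this makes the exact triangle one of $\Field[X]/(X^2)$-modules.

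Finally,~\ref{item:bp-Kunneth} and~\ref{item:bp-move-bp}. For~\ref{item:bp-Kunneth} I would revisit the proof of Proposition~\ref{prop:disjoint-union} (and, nonequivariantly, Waldron's~\cite[Theorem 1.2]{Waldron:KhSympMaps}): the quasi-isomorphism is realized by product almost complex structures on the open set $[\Hilb^n(\wt{U}_L)\setminus\nabla]\times[\Hilb^{n'}(\wt{U}_{L'})\setminus\nabla]$ appearing there, and when $p$ lies on $L_1$ the divisor $\mathcal{O}_{p_e}$ is pulled back from the first factor, so for a product curve $u=(u_1,u_2)$ the constraint ``$u(0,0)\in\mathcal{O}_{p_e}$'' is exactly ``$u_1(0,0)\in\mathcal{O}_{p_e}^{(1)}$''; hence the count splits and yields $(m\otimes n)\ast_pX=(m\ast_pX)\otimes n$ on the nose for these choices. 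The remaining property~\ref{item:bp-move-bp} is the analogue of the standard invariance of the basepoint action under moving the basepoint: in the configuration of Figure~\ref{fig:stabilization}(b), because the interior of $B_i$ misses every $A$-arc and the interior of $A_i$ misses every $B$-arc, one can interpolate between the diagram with basepoint $p\in A_i$ and the one with basepoint $p'\in A_{i+1}$ by a finite sequence of isotopies and handleslides supported near the $A_iB_iA_{i+1}$ region---each of which intertwines the actions by~\ref{item:bp-invariance}---together with a direct inspection of the small moduli spaces in the local ``stabilization'' model. I expect~\ref{item:bp-move-bp}, together with the bookkeeping needed to keep all the (families of) point-constrained moduli spaces simultaneously regular and convex at infinity within the class~\ref{item:J-first}--\ref{item:J-last}, to be the main technical obstacle, since it is the only step where the combinatorics of the bridge diagram, rather than a formal manipulation of curve counts, really enters.
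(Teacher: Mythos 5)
Your outline for~\ref{item:bp-square},~\ref{item:bp-commute},~\ref{item:bp-skein}, and~\ref{item:bp-Kunneth} agrees in substance with the paper's proof. But the three places where the paper has to do real work are exactly where your proposal is incomplete or goes astray.

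For~\ref{item:bp-unknot}, ``a direct count (or comparison with the skein triangle)'' is not an argument: the entire content of the claim is that the count of bigons through $\mathcal{O}_{p_e}$ is odd, and nothing in the formal structure forces that. The paper proves it by exhibiting the $S^1$-family of disks inside $S_0=\{v=0\}\cong\RR\times S^1$, establishing regularity via doubling and automatic transversality, then using the anti-holomorphic involution $\tau(u,v,z)=(u,-v,z)$ with $\Fix(\tau)=S_0$: for a $\tau$-invariant perturbation and $p_e\in S_0$, bigons outside $S_0$ cancel in $\tau$-pairs, leaving a single contributing disk. That involution argument is essential and missing from your sketch.

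For~\ref{item:bp-invariance}, your analysis of the ends of the one-parameter moduli space of triangles with a marked point on $\mathcal{O}_{p_e}$ does not work for $A$-handleslides. Because the $A$- and $A'$-arcs must be made disjoint, the basepoint divisor on $\CipLag_A$ (namely $\mathcal{O}_{p_e}$) and the corresponding divisor on $\CipLag_{A'}$ (namely $\mathcal{O}_{p'_e}$) are genuinely different; a single constraint on the $A$-edge produces no end corresponding to $\Phi\circ X$ at all, and instead produces an extra end where the marked point collides with the $(\CipLag_A,\CipLag_{A'})$-corner. The paper handles this by allowing the constrained marked point to live on either the $A$- or the $A'$-edge, yielding six end types (TM-1)--(TM-6); types (TM-5) and (TM-6) are the new terms, and showing they cancel requires reducing to a two-arc local model (via the K\"unneth theorem), a grading observation, and a second use of the involution $\tau$ (rotation by $\pi$ of $\CC$). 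This is the bulk of the paper's proof, and your proposal skips it entirely; the appeal to ``the isotopies and handleslides fix a neighborhood of $p$'' does not help, since $p$ cannot lie on both $A_i$ and $A'_i$.

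For~\ref{item:bp-move-bp}, the paper's move is global, not local: one handleslides and diffeomorphs the \emph{small pair} past the next arc, and then iterates this once \emph{all the way around the link component}, which returns the diagram to itself while carrying the basepoint from $A_i$ to $A_{i+1}$. A ``finite sequence of isotopies and handleslides supported near the $A_iB_iA_{i+1}$ region'' cannot do this, since by~\ref{item:bp-invariance} those moves must not move the basepoint (and indeed cannot); the basepoint is only transported by the global diffeomorphism step. You also single out~\ref{item:bp-move-bp} as ``the main technical obstacle,'' but in fact once~\ref{item:bp-invariance} is in hand, \ref{item:bp-move-bp} is a short formal argument; the $A$-handleslide case of~\ref{item:bp-invariance} is where the substance lies.
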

\begin{proof}
  Some of these properties (namely \ref{item:bp-square},
  \ref{item:bp-commute}, \ref{item:bp-skein}, and a part of
  \ref{item:bp-invariance}) are fairly standard (see,
  e.g.,~\cite[Section 8l]{SeidelBook} or \cite[Section
  3.9]{Perutz:Matching2}, and references therein) and hold for
  Lagrangian Floer homology more generally in the absence of disk bubbles, so
  we will only sketch the proofs of those properties and concentrate
  on the properties that are specific to symplectic Khovanov
  homology.

  For Property~\ref{item:bp-square}, let $p'_e$ be a point in the
  fiber over $p$ close to $p_e$. For a generic choice of almost
  complex structure, if we choose $p'_e$ close enough to $p_e$ then
  the actions induced by $p_e$ and $p'_e$ agree. Observe that
  $O_{p_e}\cap O_{p'_e}=\emptyset$. Counting holomorphic bigons with
  $u(0,0)\in O_{p_e}$ and $u(t,0)\in O_{p'_e}$ for some $t>0$ gives a
  homotopy from multiplication by $X^2$ (corresponding to
  $t\to\infty$) to $0$ (corresponding to $t=0$).

  For Property~\ref{item:bp-commute}, let $*_pX$ and $*_{p'}X$ be the
  actions at $p$ and $p'$, respectively. Counting holomorphic disks
  with $u(0,0)\in O_{p_e}$ and $u(0,t)\in O_{p'_e}$, for any
  $t\in\RR$, gives a homotopy between $*_pX*_{p'}X$ (for
  $t\to -\infty$) and $*_{p'}X*_pX$ (for $t\to +\infty$).
  
  For Property~\ref{item:bp-unknot}, observe that, with respect to the
  standard complex structure on $S$, there is an $S^1$-family of
  holomorphic disks connecting the two generators of $\KCSymp(U)$, one
  through each preimage of $p$ on $\Sigma_A=\CipLag_A$. Indeed, the
  subset $S_0=\{(u,v,z)\in S\mid v=0\}$ is bi-holomorphic to the
  cylinder $\RR\times S^1$, and $\Sigma_A\cap S_0$ and
  $\Sigma_B\cap S_0$ are circles inside $S_0$ intersecting in two
  points. There are two holomorphic disks in $S_0$, transversally cut
  out in $S_0$, whose images under $S^1$ form a single family of
  holomorphic disks passing through each preimage of $p$ on
  $\Sigma_A$. It follows from a doubling argument and automatic
  transversality as in~\cite{HLS97:GenericityHoloCurves} or,
  equivalently, a doubling argument and~\cite[Lemma
  3.3.1]{MS04:HolomorphicCurvesSymplecticTopology}, that these
  holomorphic disks are transversally cut out in $S$. Next, consider
  the involution $\tau\co S\to S$, $\tau(u,v,z)=(u,-v,z)$. Since the
  holomorphic disks inside $\Fix(\tau)=S_0$ are transversally cut out,
  we can perturb the complex structure slightly to a $\tau$-invariant
  almost complex structure in which all holomorphic bigons are
  transversally cut out (see, e.g.,~\cite[Section
  5c]{KhS02:BraidGpAction}).  Choose the preimage $p_e$ of $p$ to lie
  in $S_0$. Then any holomorphic bigons not contained in $S_0$ passing
  through $p_e$ come in pairs exchanged by $\tau$, and hence
  contribute $0 \pmod{2}$ to the disk count.
  
  Property~\ref{item:bp-invariance} is proved by considering
  holomorphic disks with cylindrical-at-infinity complex structures
  (for changes of complex structures or isotopies of the $A$- or
  $B$-arcs) or holomorphic triangles (for handleslides) with a similar
  point constraint.  This in particular includes isotopies of $B$-arcs
  that pass over $p$. For the case of a handleslide between $A$-arcs,
  there is an additional complication, so we spell out that case. Let
  $A'$ be a collection of bridges obtained from the $A$ bridges by a
  handleslide, arranged in the plane so that the $A$- and $A'$-bridges
  intersect only at their endpoints. Fix points $p\in A_i$ and
  $p'\in A'_i$, where $A'_i$ is the $A'$-arc corresponding to $A_i$, which is either a small
  translate of $A_i$, or, if $A_i$ is being handleslid, the result of
  the handleslide. Let $p_e\in\Sigma_{A_i}$ over $p$, and
  $p'_e\in\Sigma_{A'_i}$ over $p'$. We will focus on the case that $p$
  is on the arc being handleslid; the other cases are similar but easier.

  Consider the $1$-dimensional moduli space of holomorphic triangles
  with boundary on $(\CipLag_A,\CipLag_{A'},\CipLag_{B})$ with either a
  point along the $A$-edge mapped to $\mathcal{O}_{p_e}$ or a
  point along the $A'$-edge mapped to $\mathcal{O}_{p'_e}$, and the
  corner at $(\CipLag_A,\CipLag_{A'})$ mapped to the generator
  $1$. This moduli space has six kinds of ends:
  \begin{enumerate}[label=(TM-\arabic*)]
  \item Ends corresponding to a bigon for $(\CipLag_A,\CipLag_B)$ with
    a point mapping to $\mathcal{O}_{p_e}$ and a holomorphic triangle. These
    correspond to following the basepoint action for
    $(\CipLag_A,\CipLag_B)$ by the isomorphism
    $F\co \HF(\CipLag_A,\CipLag_B)\to\HF(\CipLag_{A'},\CipLag_{B})$.
  \item Ends corresponding to a bigon for $(\CipLag_{A'},\CipLag_{B})$
    with a point mapping to $\mathcal{O}_{p'_e}$ and a holomorphic triangle. These
    correspond to following $F$ by the basepoint action for
    $(\CipLag_{A'},\CipLag_B)$.
  \item Ends corresponding to a bigon for $(\CipLag_A,\CipLag_B)$ and
    a holomorphic triangle with a point constraint. These correspond
    to $H\circ\bdy$, where $H$ is a homotopy defined by counting rigid
    triangles with a point constraint.
  \item Ends corresponding to a bigon for $(\CipLag_{A'},\CipLag_{B})$
    and a holomorphic triangle with a point constraint. These
    correspond to $\bdy\circ H$, where $H$ is a homotopy defined by
    counting rigid triangles with a point constraint.
  \item\label{item:TM-5} Ends corresponding to a bigon for
    $(\CipLag_A,\CipLag_{A'})$ with a point mapping to
    $\mathcal{O}_{p_e}$ and a holomorphic triangle.
  \item\label{item:TM-6} Ends corresponding to a bigon for
    $(\CipLag_A,\CipLag_{A'})$ with a point mapping to
    $\mathcal{O}_{p'_e}$ and a holomorphic triangle.
  \end{enumerate}
  We need to show that the last two cases cancel. The triangles in the last two
  cases are the same, so we need to know that the number of bigons in the two
  cases agree (modulo 2). This count of bigons is exactly the basepoint action
  for the bridge diagram $(A,A')$, with either a basepoint on $A_i$ or
  $A'_i$. Since the differential on $\CF(\CipLag_A,\CipLag_{A'})$ is trivial for
  grading reasons, it suffices to verify that the two basepoint actions are
  homotopic.

  \begin{figure}
    \centering
    \includegraphics{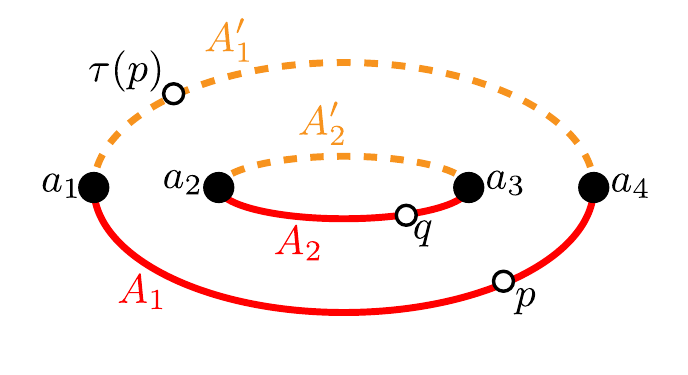}
    \caption{\textbf{Notation for proof of handleslide invariance.}
      The arcs $A_i$ and $A'_i$, their endpoints $a_1,\dots,a_4$, and
      the basepoints $p$, $\tau(p)$, and $q$ are shown.}
    \label{fig:handleslide-notation}
  \end{figure}
  
  By the proof of Proposition~\ref{prop:disjoint-union} (in the
  slightly simpler, non-equivariant case), it suffices to consider the
  case that $A$ consists of only the two arcs involved in the
  handleslide. Label the endpoints of these arcs $a_1,\dots,a_4$, so
  that $a_1$ and $a_4$ are on the outer circle of $A\cup A'$. (See
  Figure~\ref{fig:handleslide-notation}.) The generators of
  $\CF(\CipLag_A,\CipLag_{A'})$ are $\{a_1,a_2\}$, $\{a_1,a_3\}$,
  $\{a_4,a_2\}$, and $\{a_4,a_3\}$. Label the arcs so that
  $\{a_1,a_2\}$ is the top-graded generator, denoted $1$ above. Let $A_1$ be the arc with
  $\bdy A_1=\{a_1,a_4\}$ and $A_2$ the arc with
  $\bdy A_2=\{a_2,a_3\}$.

  Let $\tau\co \CC\to\CC$ be rotation by $\pi$. Arrange that $\tau$ exchanges
  $A$ and $A'$, and in particular exchanges $A_i$ and $A'_i$. There is an induced
  map $\tau\co \ssspace{n}\to\ssspace{n}$ which exchanges $\CipLag_A$ and
  $\CipLag_{A'}$.

  The basepoint $p$ lies on $A_1$. There is a corresponding basepoint
  $\tau(p)$ on $A'_1$. Choose also a basepoint $q$ on $A_2$. We claim
  that:
  \begin{enumerate}
  \item $\{a_1,a_2\}*_q X=\{a_1,a_3\}$ and $\{a_4,a_2\}*_q X=\{a_4,a_3\}$.
  \item The coefficient of $\{a_1,a_3\}$ in $\{a_1,a_2\}*_p X$ is 0,
    as is the coefficient of $\{a_1,a_3\}$ in
    $\{a_1,a_2\}*_{\tau(p)}X$.
  \item The  coefficient of $\{a_4,a_2\}$ in $\{a_1,a_2\}*_{\tau(p)}X$ is the
    same as the coefficient of $\{a_4,a_3\}$ in $\{a_1,a_3\}*_pX$.
  \end{enumerate}
  Together, these three claims imply the result. Indeed, our goal is to show that
  $\{a_1,a_2\}*_pX=\{a_1,a_2\}*_{\tau(p)}X$.  By the second claim,
  $\{a_1,a_2\}*_pX=\epsilon\{a_4,a_2\}$ and
  $\{a_1,a_2\}*_{\tau(p)}X=\delta\{a_4,a_2\}$ for some
  $\epsilon,\delta\in \FF_2$; our goal is to show that
  $\epsilon=\delta$.  By Property~\ref{item:bp-commute},
  \[
  \{a_1,a_2\}*_q X*_pX=\{a_1,a_2\}*_pX*_qX.
  \]
  For the left hand side, by the first claim, $\{a_1,a_2\}*_q
  X=\{a_1,a_3\}$, and by the third claim,
  $\{a_1,a_3\}*_pX=\delta\{a_4,a_3\}$. For the right hand side,
  $\{a_1,a_2\}*_pX=\epsilon\{a_4,a_2\}$, and by the first claim,
  $\epsilon\{a_4,a_2\}*_q X=\epsilon\{a_4,a_3\}$. Therefore, $\epsilon=\delta$.
  
  It remains to verify the three properties.  Recall from
  Section~\ref{sec:background} that a Whitney disk in $\ssspace{n}$
  has a projected domain.
  For the first claim, observe
  that the projected domain of a holomorphic bigon counted for
  $\{a_1,a_2\}*_q X$ is contained entirely in the bounded region of
  $\CC\setminus(A_2\cup A'_2)$. Thus, the proof of the K\"unneth
  theorem shows that this computation is the same as in the $1$-bridge
  unknot case, so this follows from
  Property~\ref{item:bp-unknot}. Similarly, the second claim
  follows from the fact that there is no projected domain compatible
  with this action.

  For the last claim we use the involution $\tau$. Given a
  $J$-holomorphic Whitney disk $u\co \RR\times[0,1]\to \ssspace{n}$
  for $(\CipLag_A,\CipLag_{A'})$ connecting $\{a_1,a_2\}$ to
  $\{a_4,a_2\}$ (so $u(\RR\times\{0\})\subset\CipLag_A$ and
  $\lim_{s\to-\infty}u(s,t)=\{a_1,a_2\}$), there is a corresponding
  $(\tau_*J)$-holomorphic disk
  $\tau\circ u\co \RR\times[0,1]\to \ssspace{n}$ for
  $(\CipLag_{A'},\CipLag_A)$ connecting $\{a_4,a_3\}$ to $\{a_1,a_3\}$
  (that is, $(\tau\circ u)(\RR\times\{0\})\subset\CipLag_{A'}$ and
  $\lim_{s\to -\infty}(\tau\circ u)(s,t)=\{a_4,a_3\}$). There is a
  holomorphic map $\sigma\co \RR\times[0,1]\to\RR\times[0,1]$ given by
  $\sigma(s,t)=(1-s,-t)$ (rotation around the middle of the
  strip). Then $\tau\circ u\circ \sigma$ is a $(\tau_*J)$-holomorphic
  Whitney disk for $(\CipLag_A,\CipLag_{A'})$ connecting $\{a_1,a_3\}$
  to $\{a_4,a_3\}$. Further, $u$ passes through $\mathcal{O}_{\tau(p_e)}$ if
  and only if $\tau\circ u\circ\sigma$ passes through
  $\mathcal{O}_{p_e}$. So, if we take $p'_e=\tau(p_e)$ then the
  basepoint action using $p'_e$ and the almost complex structure $J$
  agrees with the basepoint action using $p_e$ and the almost complex
  structure $\tau_*(J)$. Since the basepoint action on homology is
  independent of the choice of almost complex structure and the
  differential on the Floer complex is trivial, it follows
  that the coefficient of $\{a_4,a_2\}$ in $\{a_1,a_2\}*_{\tau(p)}X$ is the
  same as the coefficient of $\{a_4,a_3\}$ in
  $\{a_1,a_3\}*_{p}X$, as desired. This concludes the proof of
  Property~\ref{item:bp-invariance}.
  
  Property~\ref{item:bp-skein} follows from the same argument as the case of
  $B$-handleslides in Property~\ref{item:bp-invariance}, by considering moduli
  spaces of holomorphic triangles with a point constraint. (Because only one
  edge of the triangle has a point constraint for the skein maps or a
  $B$-handleslide, there are no degenerations analogous to types~\ref{item:TM-5}
  and~\ref{item:TM-6}, making these cases easier than the case of
  $A$-handleslides.)

  \tikzstyle{aarc}=[draw={rgb,255:red,63;green,64;blue,150},line width=1pt,solid,-]
  \tikzstyle{barc}=[draw={rgb,255:red,238;green,51;blue,56},line width=1pt,dash pattern={on 2pt off 1pt},-]  
  \tikzstyle{inter}=[anchor=center,circle,inner sep=0,outer sep=0,minimum width=5pt,fill={rgb,255:red,55;green,53;blue,53}]
  \tikzstyle{outerr}=[outer sep=10pt,draw={rgb,255:red,55;green,53;blue,53},dash pattern={on 1pt off 1pt},line width=0.5pt,rounded corners]
  \begin{figure}
    \begin{tikzpicture}[xscale=1.35]
      \node[outerr] (initial) at (0,0) {
        \begin{tikzpicture}[xscale=-1,yscale=-1]
          \draw[aarc] (-0.5,-0.5) to[out=0,in=-90] (0,0);
          \draw[barc] (0,0) -- (1,0);
          \draw[aarc] (1,0) -- (2,0);
          \draw[barc] (2,0) -- (2,1);
          \draw[aarc] (2,1) -- (0,1) to[out=180,in=-90] (-0.5,1.5);
          
          \draw[aarc] (-0.5,0.4) -- (2.5,0.4);
          \draw[aarc] (-0.5,0.6) -- (2.5,0.6);
          
          \node[inter] at (0,0) {};
          \node[inter] at (1,0) {};
          \node[inter] at (2,0) {};
          \node[inter] at (2,1) {};
        \end{tikzpicture}};
      \draw[->] (initial.east) --++(1,0) node[pos=0.5,anchor=south] {\tiny handleslides}
      node[pos=1,anchor=west,outerr] (handleslid) {
        \begin{tikzpicture}[xscale=-1,yscale=-1]
          \draw[aarc] (-0.5,-0.5) to[out=0,in=-90] (0,0);
          \draw[barc] (0,0) -- (1,0);
          \draw[aarc] (1,0) -- (2,0);
          \draw[barc] (2,0) -- (2,1);
          \draw[aarc] (2,1) -- (0,1) to[out=180,in=-90] (-0.5,1.5);
          
          \draw[aarc,rounded corners] (-0.5,0.4) -- (0.6,0.4) -- (0.6,-0.4)-- (2.4,-0.4) -- (2.4,0.4) -- (2.5,0.4); 
          \draw[aarc,rounded corners] (-0.5,0.6) -- (0.8,0.6) -- (0.8,-0.2)-- (2.2,-0.2) -- (2.2,0.6) -- (2.5,0.6); 
          
          \node[inter] at (0,0) {};
          \node[inter] at (1,0) {};
          \node[inter] at (2,0) {};
          \node[inter] at (2,1) {};
        \end{tikzpicture}};
      \draw[->] (handleslid.east) --++(1,0) node[pos=0.5,anchor=south] {\tiny diffeomorphism}
      node[pos=1,anchor=west,outerr] (isotoped) {
        \begin{tikzpicture}[xscale=-1,yscale=-1]
          \draw[aarc] (-0.5,-0.5) to[out=0,in=-90] (0,0);
          \draw[barc] (0,1) -- (1,1);
          \draw[aarc] (1,1) -- (2,1);
          \draw[barc,rounded corners] (0,0) -- (2,0) -- (2,1);
          \draw[aarc] (0,1) to[out=180,in=-90] (-0.5,1.5);
          
          \draw[aarc] (-0.5,0.4) -- (2.5,0.4);
          \draw[aarc] (-0.5,0.6) -- (2.5,0.6);
          
          \node[inter] at (0,0) {};
          \node[inter] at (1,1) {};
          \node[inter] at (2,1) {};
          \node[inter] at (0,1) {};
        \end{tikzpicture}};
    \end{tikzpicture}
    \caption{\textbf{Moving the basepoint.}  Using handleslides and
      diffeomorphism, one can move a small pair (a pair of adjacent $A$ and
      $B$ arcs whose interiors are disjoint from all bridges) across
      the next arc. For the configuration from
      Figure~\ref{fig:stabilization}(b), the basepoint can be moved
      from the $A$-arc in the small pair to the adjacent $A$-arc by
      performing this local move once around the entire link
      component.}\label{fig:move-basepoint}
  \end{figure}
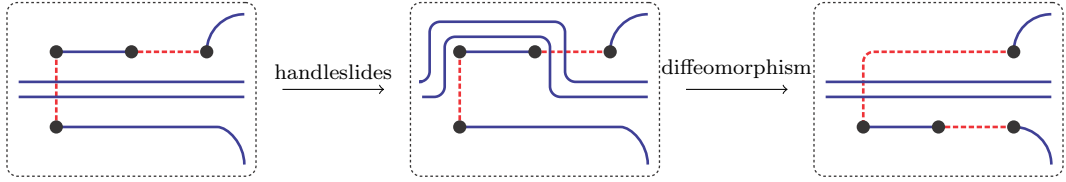

  For Property~\ref{item:bp-move-bp}, we use the well-known trick of moving the
  basepoint using handleslides and diffeomorphisms. Call a pair of adjacent
  $A$ and $B$ arcs \emph{a small pair} if their interiors are disjoint
  from all bridges, for example as in
  Figure~\ref{fig:stabilization}(b). Figure~\ref{fig:move-basepoint}
  shows how a small pair can be moved across an adjacent arc using
  handleslides and a diffeomorphism. Repeating this move
  once around the link component has the effect of moving the
  basepoint from the $A$-arc in a small pair to the next $A$-arc.

  Property~\ref{item:bp-Kunneth} is immediate from the definitions and
  the proof of Proposition~\ref{prop:disjoint-union}.
\end{proof}

\subsection{Proof of equivariant stabilization invariance}

\begin{figure}
  \centering
  \begin{overpic}[tics=10]{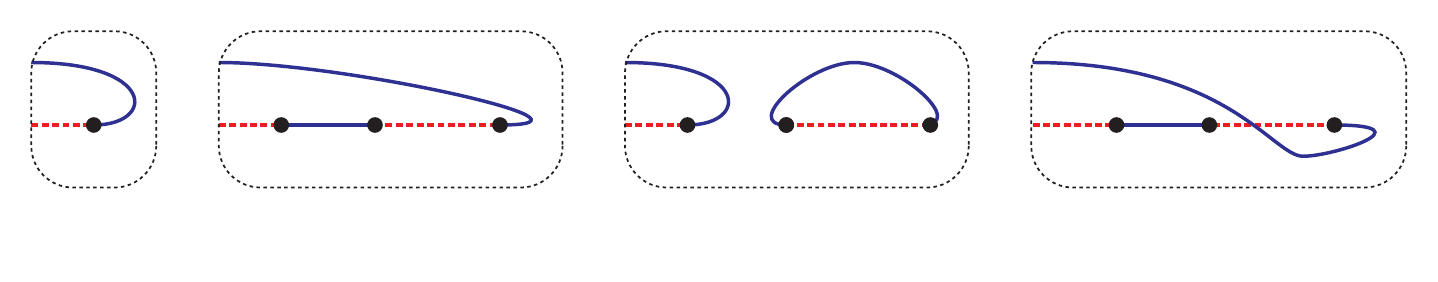}
    \put(5,2){(a)}
    \put(25,2){(b)}
    \put(54,2){(c)}
    \put(83,2){(d)}
  \end{overpic}
  \caption{\textbf{Stabilization and the skein sequence.} This
    is~\cite[Figure 3]{HEquivariant}. (a) A piece of a bridge
    diagram. $A_i$ is \textcolor{red}{dashed} and $B_i$ is
    \textcolor{blue}{solid}. (b) A stabilization of the bridge
    diagram. (c) The disjoint union of the bridge diagram with an
    unknot. (d) A Reidemeister I move applied to the
    stabilization. Diagrams (b) and (c) are obtained from (d) by
    unoriented skein moves.}
  \label{fig:stabilization}
\end{figure}

\begin{proposition}\label{prop:equi-stab-inv}
  Let $(\{A_i\},\{B_i\})$ and $(\{A'_i\},\{B'_i\})$ be bridge diagrams
  for a link $K$ which differ by a single stabilization, as in
  Figure~\ref{fig:stabilization}(a,b). Then there is a quasi-isomorphism
  $\eKCSymp(\{A_i\},\{B_i\})\simeq \eKCSymp(\{A'_i\},\{B'_i\})$ over $\Field[D_{2^m}]$.
\end{proposition}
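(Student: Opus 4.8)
The plan is to follow the template of the proof of equivariant stabilization invariance for the Heegaard Floer homology of branched double covers, \cite[Theorem 1.24]{HEquivariant}, substituting the skein triangle (Theorem~\ref{thm:skein-tri}), the K\"unneth theorem (Proposition~\ref{prop:disjoint-union}), and the basepoint action (Theorem~\ref{thm:unknot-action-2}) for the corresponding Heegaard Floer inputs. First I would realize the three local pictures of Figure~\ref{fig:skein-bridges} as collections of bridges $B,B',B''$ sharing a fixed collection $A$ of $A$-arcs, chosen so that the bridge diagrams $\mathcal{D}_b=(A,B)$, $\mathcal{D}_c=(A,B')$, $\mathcal{D}_d=(A,B'')$ are, respectively, the stabilized diagram $(\{A'_i\},\{B'_i\})$ of Figure~\ref{fig:stabilization}(b), the split union $(\{A_i\},\{B_i\})\amalg U$ of Figure~\ref{fig:stabilization}(c) with a $1$-bridge unknot $U$, and the Reidemeister~I modification of Figure~\ref{fig:stabilization}(d). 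The diagram $\mathcal{D}_d$ represents the same link $K$ as $(\{A'_i\},\{B'_i\})$ and is obtained from it by isotopies and handleslides (not a stabilization), so by the still-valid invariance of the freed Floer complex \cite[Proposition 3.28]{HEquivariant} we have $\eKCSymp(\mathcal{D}_d)\simeq\eKCSymp(\{A'_i\},\{B'_i\})=\eKCSymp(\mathcal{D}_b)$ over $\Field[D_{2^m}]$.

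Next I would promote Theorem~\ref{thm:skein-tri} to an exact triangle of freed complexes over $\Field[D_{2^m}]$. The essential point is the remark following Theorem~\ref{thm:skein-tri}: for suitable diagrams the generators of $\CF(\CipLag_B,\CipLag_{B'})$, $\CF(\CipLag_{B'},\CipLag_{B''})$ and $\CF(\CipLag_{B''},\CipLag_B)$, hence in particular the triangle-defining classes $\alpha,\beta,\gamma$, are all $O(2)$-fixed, while the Lagrangians $\CipLag_{(\cdot)}$ and the $I$-convex symplectic data of Section~\ref{sec:convex} are $O(2)$-invariant. Thus the homotopy-coherent families of almost complex structures and triangle counts underlying Theorem~\ref{thm:skein-tri} can be chosen $O(2)$-equivariantly, and running them through the freed construction of \cite{HEquivariant} (in the $I$-convex formulation of Section~\ref{sec:convex}) yields an exact triangle relating $\eKCSymp(\mathcal{D}_b)$, $\eKCSymp(\mathcal{D}_c)$ and $\eKCSymp(\mathcal{D}_d)$ over $\Field[D_{2^m}]$.

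With the triangle in hand I would identify its terms and the connecting map. By Proposition~\ref{prop:disjoint-union}, $\eKCSymp(\mathcal{D}_c)\simeq\eKCSymp(\{A_i\},\{B_i\})\otimes_\Field\eKCSymp(U)$ with the diagonal $D_{2^m}$-action, and $\eKCSymp(U)$ is free of rank two over $\Field[D_{2^m}]$ with trivial differential, since the $1$-bridge unknot has $\KCSymp(U)\cong\Field[X]/(X^2)$ by property~\ref{item:bp-unknot} of Theorem~\ref{thm:unknot-action-2} and its two generators are $O(2)$-fixed. Now place a basepoint on $U$ in $\mathcal{D}_c$ and corresponding basepoints on the small pairs appearing in $\mathcal{D}_b$ and $\mathcal{D}_d$. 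Then, by Theorem~\ref{thm:unknot-action-2}, all three terms carry compatible $\Field[X]/(X^2)$-module structures, the triangle maps are $\Field[X]/(X^2)$-linear up to homotopy, the action on $\eKCSymp(\mathcal{D}_c)$ is $1\otimes X$ under the K\"unneth identification, and moving basepoints across small pairs changes nothing---these are properties~\ref{item:bp-square}, \ref{item:bp-skein}, \ref{item:bp-Kunneth} and \ref{item:bp-move-bp} of that theorem. Chasing these constraints shows that, non-equivariantly, the connecting map is the one appearing in the standard skein/Reidemeister~I computation for Khovanov-type homology, namely (after the K\"unneth splitting) the inclusion of the $\langle 1\rangle$-summand $\eKCSymp(\{A_i\},\{B_i\})\otimes\langle 1\rangle$; since it is simultaneously a map of freed complexes and a non-equivariant quasi-isomorphism onto the relevant summand, it is an equivariant quasi-isomorphism there, and the resulting mapping-cone computation identifies the third term of the triangle, giving $\eKCSymp(\{A'_i\},\{B'_i\})=\eKCSymp(\mathcal{D}_b)\simeq\eKCSymp(\{A_i\},\{B_i\})$ over $\Field[D_{2^m}]$ (up to a grading shift absorbed into the conventions). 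This proves the proposition.

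The step I expect to be the main obstacle is the passage to the equivariant setting in the two places where it is not formal: (i) verifying that the homotopy-coherent data underlying the skein triangle of Theorem~\ref{thm:skein-tri} genuinely can be chosen $O(2)$-equivariantly, so that the freed construction outputs an honest exact triangle of $\Field[D_{2^m}]$-complexes rather than only a long exact sequence on homology; and (ii) controlling the connecting map closely enough---up to equivariant homotopy and over the ring $\Field[D_{2^m}]$, not merely on homology---to carry out the cone computation. Both come down to bookkeeping with the basepoint action of Theorem~\ref{thm:unknot-action-2} together with the $I$-convexity provided by Section~\ref{sec:convex}; the underlying non-equivariant statement is already known from Theorem~\ref{thm:SS-invt} and \cite[Theorem 1.2]{Waldron:KhSympMaps}, so it is exactly the ``equivariantly, throughout'' that carries the content.
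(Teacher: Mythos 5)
Your ingredients are the right ones (the skein triangle, the K\"unneth theorem, the basepoint action), and your labeling of the diagrams matches the paper's, but the structure of the argument you propose diverges from the paper's in a way that matters. You want to promote the entire skein triangle to an exact triangle of freed complexes over $\Field[D_{2^m}]$ and then identify the connecting map so a cone computation finishes the job. The paper deliberately avoids this. It never claims that the skein triangle becomes a distinguished triangle of freed complexes. Instead it lifts only one of the three maps, $g\co\eKCSymp(L_0)\to\eKCSymp(L_1)$, to the freed level (this is possible because $g$ counts triangles with a corner pinned at a $D_{2^m}$-fixed intersection point; compare~\cite[Proof of Proposition 3.25]{HEquivariant}). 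It then produces a second equivariant map directly --- the split inclusion $f\co\eKCSymp(L)\to\eKCSymp(L)\otimes_\Field\eKCSymp(U)\simeq\eKCSymp(L_0)$, $f(y)=y\otimes 1$ --- and shows $g\circ f$ is a quasi-isomorphism by a purely non-equivariant computation: the non-equivariant skein sequence is short exact, the image of $*_pX-*_qX$ lies in $\ker g_*$ by~\ref{item:bp-skein} and~\ref{item:bp-move-bp}, and since $(*_pX-*_qX)^2=0$ (so image equals kernel) a dimension count finishes the argument. Because $g\circ f$ is an equivariant chain map inducing an isomorphism on underlying Floer homology, it is a quasi-isomorphism of freed complexes, and no cone computation in the equivariant category is needed.

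The step you flag as ``the main obstacle'' --- producing an exact triangle of $\Field[D_{2^m}]$-complexes from the skein triangle --- is, I think, a genuine gap rather than bookkeeping. Even granting that $\alpha,\beta,\gamma$ are $O(2)$-fixed and that all the Lagrangian and symplectic data can be chosen $O(2)$-invariantly, the freed complex is a homotopy colimit over $\ECat H$ of a homotopy coherent diagram of chain complexes, and a distinguished triangle of underlying Floer complexes does not formally induce a distinguished triangle of freed complexes: you would need the whole triangle (including the homotopies witnessing exactness, not just the three maps and the generators they are supported on) to be realized coherently over $\ECat H$. Relatedly, your identification of the connecting map as ``inclusion of the $\langle 1\rangle$-summand'' is asserted by analogy with the Khovanov computation, but the paper's argument shows exactly why this kind of identification can be sidestepped: all you need to know is that $g_*$ kills $\im(*_pX-*_qX)$, and that $f$ maps onto a complement of this. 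If you want to repair your route, the cleanest fix is precisely the paper's: drop the claim to an equivariant triangle, construct $f$ directly, and verify $g\circ f$ non-equivariantly.
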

\begin{proof}
  Let $L$ be a link diagram as in Figure~\ref{fig:stabilization}(a),
  $L_1$ as in Figure~\ref{fig:stabilization}(b), $L_0$ as in
  Figure~\ref{fig:stabilization}(c), and $L'$ as in
  Figure~\ref{fig:stabilization}(d). Our goal is to show that there is
  a quasi-isomorphism
  \[
    \eKCSymp(L)\simeq\eKCSymp(L_1)
  \]
  over $\Field[D_{2^m}]$.

  Let $p$ be a point in $L$ on the $A$-arc in the figure and $q$ a point
  on $A_{n+1}$.  By Proposition~\ref{prop:disjoint-union}, there is a
  quasi-isomorphism
  \[
    \eKCSymp(L_0)\simeq \eKCSymp(L)\otimes\eKCSymp(U)
  \]
  over $\Field[D_{2^m}]$.  On homology, this gives an isomorphism
  \[
    \KhSymp(L_0)\cong \KhSymp(L)\otimes\KhSymp(U).
  \]
  Theorem~\ref{thm:unknot-action-2} gives two actions of $\Field[X]/(X^2)$, one coming from the point $p$
  and one from the point $q$, and this isomorphism respects the
  actions.  Write the action at $p$ (respectively $q$) as $*_p$
  (respectively $*_q$). We claim that
  \begin{equation}\label{eq:p-is-q-subset}
    \KhSymp(L)\cong\{m\in\KhSymp(L)\otimes\KhSymp(U)\mid m*_pX=m*_qX\}.
  \end{equation}
  Indeed, from Properties~\ref{item:bp-unknot} and~\ref{item:bp-Kunneth}, $a\otimes 1+b\otimes X$ is an element of the right-hand side
  if and only if $a*_pX=0$ and $a=b*_pX$, but the second equation
  implies the first. So, this set is exactly
  $\{(b*_pX\otimes 1)+(b\otimes X)\}$, which is isomorphic (as an
  $\FF_2$-vector space) to $\KhSymp(L)$.

  Now, consider the skein exact triangle
  \[
    \cdots\to \KhSymp(L')\to \KhSymp(L_0)\to \KhSymp(L_1)\to\cdots.
  \]
  By Theorem~\ref{thm:SS-invt} and (the slightly simpler,
  non-equivariant case of) Proposition~\ref{prop:disjoint-union},
  this triangle is, in fact, a short exact sequence
  \[
    0\to \KhSymp(L')\to \KhSymp(L_0)\to \KhSymp(L_1)\to 0.
  \]
  Let $g_*\co \KhSymp(L_0)\to \KhSymp(L_1)$ be the map from this
  sequence. Since Property~\ref{item:bp-move-bp} implies the actions at $p$ and $q$ are the same on
  $\KhSymp(L_1)$, by Property~\ref{item:bp-skein}
  the map $g_*$ sends the image of $(*_pX-*_qX)$ to $0$.
  Hence, by exactness and comparing dimensions, $g_*$ sends
  \[
    \KhSymp(L_0)/\{m\in\KhSymp(L)\otimes\KhSymp(U)\mid m*_pX=m*_qX\}\cong \KhSymp(L)
  \]
  isomorphically to $\KhSymp(L_1)$.
  There is a chain map over $\Field[D_{2^m}]$
  \[
    f\co \eKCSymp(L)\to \eKCSymp(L)\otimes_\Field\eKCSymp(U)
  \]
  induced by 
  $f(y)=y\otimes 1$.  The induced map on homology is an isomorphism
  from $\KhSymp(L)$ to
  $\KhSymp(L)/\{m\in\KhSymp(L)\otimes\KhSymp(U)\mid m*_pX=m*_qX\}$.
  
  Since the map $g_*$ is induced by counting holomorphic triangles
  with one corner at a $D_{2^m}$-invariant intersection point, the map
  $g_*$ is induced by a map $g\co \eKCSymp(L_0)\to \eKCSymp(L_1)$
  (see, e.g.,~\cite[Proof of Proposition 3.25]{HEquivariant}). The
  composition $g\circ f\co \eKCSymp(L)\to \eKCSymp(L_1)$ is the
  desired quasi-isomorphism.
\end{proof}

\begin{proof}[Proof of Theorem 1.26]
  This is immediate from isotopy and handleslide invariance, verified
  in our original proof of the theorem, and
  Proposition~\ref{prop:equi-stab-inv}.
\end{proof}

\bibliographystyle{hamsalpha}
\bibliography{heegaardfloer}

\providecommand{\bysame}{\leavevmode\hbox to3em{\hrulefill}\thinspace}
\providecommand{\href}[2]{#2}
\providecommand{\eprint}{\begingroup \urlstyle{rm}\Url}
\begin{thebibliography}{Per08b}

\bibitem[AS16]{AbouzaidSmith:arc-alg}
Mohammed Abouzaid and Ivan Smith, \emph{The symplectic arc algebra is formal},
  Duke Math. J. \textbf{165} (2016), no.~6, 985--1060.

\bibitem[AS19]{AbouzaidSmith:KhSympKh}
\bysame, \emph{Khovanov homology from {F}loer cohomology}, J. Amer. Math. Soc.
  \textbf{32} (2019), no.~1, 1--79.

\bibitem[Dou66]{Douady66}
Adrien Douady, \emph{Le probl\`eme des modules pour les sous-espaces
  analytiques compacts d'un espace analytique donn\'e}, Ann. Inst. Fourier
  (Grenoble) \textbf{16} (1966), no.~fasc. 1, 1--95.

\bibitem[EG91]{EliashbergGromov91:convex}
Yakov Eliashberg and Mikhael Gromov, \emph{Convex symplectic manifolds},
  Several complex variables and complex geometry, {P}art 2 ({S}anta {C}ruz,
  {CA}, 1989), Proc. Sympos. Pure Math., vol.~52, Amer. Math. Soc., Providence,
  RI, 1991, pp.~135--162.

\bibitem[Gro85]{Gromov85}
M.~Gromov, \emph{Pseudoholomorphic curves in symplectic manifolds}, Invent.
  Math. \textbf{82} (1985), no.~2, 307--347.

\bibitem[HLS]{HLS:Lie}
Kristen Hendricks, Robert Lipshitz, and Sucharit Sarkar, \emph{A simplicial
  construction of {G}-equivariant {F}loer homology}, \eprint{arXiv:1609.09132}.

\bibitem[HLS97]{HLS97:GenericityHoloCurves}
Helmut Hofer, V{\'e}ronique Lizan, and Jean-Claude Sikorav, \emph{On genericity
  for holomorphic curves in four-dimensional almost-complex manifolds}, J.
  Geom. Anal. \textbf{7} (1997), no.~1, 149--159.

\bibitem[HLS16]{HEquivariant}
Kristen Hendricks, Robert Lipshitz, and Sucharit Sarkar, \emph{A flexible
  construction of equivariant {F}loer homology and applications}, J. Topol.
  \textbf{9} (2016), no.~4, 1153--1236.

\bibitem[KS02]{KhS02:BraidGpAction}
Mikhail Khovanov and Paul Seidel, \emph{Quivers, {F}loer cohomology, and braid
  group actions}, J. Amer. Math. Soc. \textbf{15} (2002), no.~1, 203--271.

\bibitem[Man06]{Manolescu06:nilpotent}
Ciprian Manolescu, \emph{Nilpotent slices, {H}ilbert schemes, and the {J}ones
  polynomial}, Duke Math. J. \textbf{132} (2006), no.~2, 311--369.

\bibitem[MS]{MakSmith:cylindrical}
Cheuk~Yu Mak and Ivan Smith, \emph{{F}ukaya-{S}eidel categories of {H}ilbert
  schemes and parabolic category {$\mathcal{O}$}}, \eprint{arXiv:1907.07624}.

\bibitem[MS04]{MS04:HolomorphicCurvesSymplecticTopology}
Dusa McDuff and Dietmar Salamon, \emph{{$J$}-holomorphic curves and symplectic
  topology}, American Mathematical Society Colloquium Publications, vol.~52,
  American Mathematical Society, Providence, RI, 2004.

\bibitem[O{\relax Sz}04]{OS04:HolomorphicDisks}
Peter~S. Ozsv{\'a}th and Zolt{\'a}n {\relax Sz}ab{\'o}, \emph{Holomorphic disks
  and topological invariants for closed three-manifolds}, Ann. of Math. (2)
  \textbf{159} (2004), no.~3, 1027--1158.

\bibitem[Per08a]{Perutz:Matching2}
Tim Perutz, \emph{Lagrangian matching invariants for fibred four-manifolds.
  {II}}, Geom. Topol. \textbf{12} (2008), no.~3, 1461--1542.

\bibitem[Per08b]{Perutz07:HamHand}
Timothy Perutz, \emph{Hamiltonian handleslides for {H}eegaard {F}loer
  homology}, Proceedings of {G}\"okova {G}eometry-{T}opology {C}onference 2007,
  G\"okova Geometry/Topology Conference (GGT), G\"okova, 2008, pp.~15--35.

\bibitem[Sei08]{SeidelBook}
Paul Seidel, \emph{Fukaya categories and {P}icard-{L}efschetz theory}, Zurich
  Lectures in Advanced Mathematics, European Mathematical Society (EMS),
  Z\"urich, 2008.

\bibitem[SS06]{SeidelSmith6:Kh-symp}
Paul Seidel and Ivan Smith, \emph{A link invariant from the symplectic geometry
  of nilpotent slices}, Duke Math. J. \textbf{134} (2006), no.~3, 453--514.

\bibitem[Voi00]{Voisin}
Claire Voisin, \emph{On the {H}ilbert scheme of points of an almost complex
  fourfold}, Ann. Inst. Fourier (Grenoble) \textbf{50} (2000), no.~2, 689--722.

\bibitem[Wal09]{Waldron:KhSympMaps}
Jack~W. Waldron, \emph{An invariant of link cobordisms from symplectic
  {K}hovanov homology}, 2009, \eprint{arXiv:0912.5067}.

\end{thebibliography}
\end{document}
